\numberwithin{equation}{section}
\newtheorem{theorem}{Theorem}
\newtheorem{definition}{Definition}
\newtheorem{example}{Example}
\newtheorem{proposition}{Proposition}
\newtheorem{remark}{Remark}
\newtheorem{corollary}{Corollary}
\newtheorem{lemma}{Lemma}
\newcommand{\Addresses}{{% additional braces for segregating \footnotesize
  \bigskip
  \footnotesize

  Junhua Zheng, \textsc{School of Mathematical Sciences Capital Normal University,
    }\par\nopagebreak
  \textit{E-mail address}, Junhua Zheng: \texttt{2200502047@cnu.edu.cn}

  \medskip

}}
\title{A correspondence and distance of t-structures}
\author{Junhua Zheng}
\date{}
\begin{document}

\maketitle

\begin{abstract}
For two t-structures $D_{1}=(D_{1}^{\leqslant 0},D_{1}^{\geqslant 1})$ and $D_{2}=(D_{2}^{\leqslant 0},D_{2}^{\geqslant 1})$ with $D_{1}^{\leqslant 0} \subseteq D_{2}^{\leqslant 0}$ on a triangulated category $\mathcal{D}$, we give a correspondence between t-structure $D_{i}=(D_{i}^{\leqslant 0},D_{i}^{\geqslant 1})$ which satisfies $D_{1}^{\leqslant 0} \subseteq D_{i}^{\leqslant 0} \subseteq D_{2}^{\leqslant 0}$ and a pair of full subcategories of $D_{1}^{\geqslant 1}\bigcap D_{2}^{\leqslant 0}$. Then we give a way to determine the distance of two t-structure if we have known that their distance is finite.In addition, if we set a t-structure $D_{1}$ whose heart $H_{1} \neq 0$ and that $H_{1}$ has a non-trivial torsion pair, then for any integer $n$, we can construct a t-structure $D_{2}$ such that the distance between $D_{1}$ and $D_{2}$ is $n$.
\end{abstract}

\section{Introduction}
The notion of t-structure was introduced in \cite{deligne1983faisceaux}. t-structures are the tool which allows us to see the different abelian categories embedded in a given triangulated category. They have been widely studied in various contexts including representation theory, silting theory, cluster theory and stability conditions(\cite{bridgeland2007stability}, \cite{qiu2018contractible}).

\par Let $\mathcal{D}$ be a triangulated category. We denote $D_{i}=(D_{i}^{\leqslant 0},D_{i}^{\geqslant 1})$ for t-structure on $\mathcal{D}$ and $H_{i}$ for the heart of $D_{i}$. Suppose that $D_{1}=(D_{1}^{\leqslant 0},D_{1}^{\geqslant 1})$ and $D_{2}=(D_{2}^{\leqslant 0},D_{2}^{\geqslant 1})$ are t-structures on $\mathcal{D}$ with $D_{1}^{\leqslant 0} \subseteq D_{2}^{\leqslant 0}$. We can also define t-structure on $D_{1}^{\geqslant 1}\bigcap D_{2}^{\leqslant 0}$, which will be introduced in section 3. The first main result of this paper is to give a correspondence as follow.

\begin{theorem}
There is a correspondence:
$$\begin{array}{*{3}{rll}}
\text{\{ t-structure \,} D_{i}=(D_{i}^{\leqslant 0},D_{i}^{\geqslant 1}) \text{\, on \,} \mathcal{D} |\text{\,} D_{1}^{\leqslant 0} \subseteq D_{i}^{\leqslant 0} \subseteq D_{2}^{\leqslant 0}\} & \leftrightarrow & \{\text{t-structure on \,} \mathcal{H} \} \\
(D_{i}^{\leqslant 0},D_{i}^{\geqslant 1}) & \mapsto & (\mathcal{H} \bigcap D_{i}^{\leqslant 0},D_{i}^{\geqslant 1}\bigcap \mathcal{H})\\
(D_{1}^{\leqslant 0}\ast U_{i}^{\leqslant 0}, U_{i}^{\geqslant 1}\ast D_{2}^{\geqslant 1}) & \mapsfrom & (U_{i}^{\leqslant 0}, U_{i}^{\geqslant 1}).
\end{array}$$
where $\mathcal{H}=D_{1}^{\geqslant 1}\bigcap D_{2}^{\leqslant 0}$
\end{theorem}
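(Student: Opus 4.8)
The plan is to show that both assignments are well defined and then that they are mutually inverse; the backward assignment is the delicate one. Throughout I use that an aisle (resp.\ co-aisle) is closed under extensions and under positive (resp.\ negative) shifts, that $D_1^{\leqslant 0}\subseteq D_2^{\leqslant 0}$ is equivalent to $D_2^{\geqslant 1}\subseteq D_1^{\geqslant 1}$ and that $D_1^{\leqslant 0}\subseteq D_i^{\leqslant 0}\subseteq D_2^{\leqslant 0}$ is equivalent to $D_2^{\geqslant 1}\subseteq D_i^{\geqslant 1}\subseteq D_1^{\geqslant 1}$, together with the notion of t-structure on $\mathcal{H}$ from Section~3 and associativity of $\ast$. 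In particular I expect the relevant shift on $\mathcal{H}$ to be $X\mapsto\tau_1^{\geqslant 1}(X[1])$ and the co-shift to be $X\mapsto\tau_2^{\leqslant 0}(X[-1])$, which one first checks land again in $\mathcal{H}$ by applying the ambient truncations together with extension-closure of $D_1^{\geqslant 1}$ and $D_2^{\leqslant 0}$.

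The first step is the pair of splitting identities
\[
D_i^{\leqslant 0}=D_1^{\leqslant 0}\ast\bigl(D_1^{\geqslant 1}\cap D_i^{\leqslant 0}\bigr),\qquad D_i^{\geqslant 1}=\bigl(D_i^{\geqslant 1}\cap D_2^{\leqslant 0}\bigr)\ast D_2^{\geqslant 1},
\]
valid for every intermediate $D_i$. For the first, apply to $X\in D_i^{\leqslant 0}$ the triangle $\tau_1^{\leqslant 0}X\to X\to\tau_1^{\geqslant 1}X\to$; its rotation $X\to\tau_1^{\geqslant 1}X\to\tau_1^{\leqslant 0}X[1]\to$ exhibits $\tau_1^{\geqslant 1}X$ as an extension of objects of $D_i^{\leqslant 0}$, whence $\tau_1^{\geqslant 1}X\in D_1^{\geqslant 1}\cap D_i^{\leqslant 0}$, while the reverse inclusion is extension-closure of $D_i^{\leqslant 0}$; the second identity is dual, using the $D_2$-truncation of $Y\in D_i^{\geqslant 1}$. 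Since $D_i^{\leqslant 0}\subseteq D_2^{\leqslant 0}$ and $D_i^{\geqslant 1}\subseteq D_1^{\geqslant 1}$ give $\mathcal{H}\cap D_i^{\leqslant 0}=D_1^{\geqslant 1}\cap D_i^{\leqslant 0}$ and $D_i^{\geqslant 1}\cap\mathcal{H}=D_i^{\geqslant 1}\cap D_2^{\leqslant 0}$, these identities say precisely that ``forward then backward'' is the identity.

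For the backward assignment, fix an $\mathcal{H}$-t-structure $(U^{\leqslant 0},U^{\geqslant 1})$ and put $\mathcal{U}=D_1^{\leqslant 0}\ast U^{\leqslant 0}$, $\mathcal{V}=U^{\geqslant 1}\ast D_2^{\geqslant 1}$. The vanishing $\mathrm{Hom}(\mathcal{U},\mathcal{V})=0$ reduces, by the usual long-exact-sequence dévissage along the defining triangles, to four vanishings: $\mathrm{Hom}(D_1^{\leqslant 0},U^{\geqslant 1})=0$ (as $U^{\geqslant 1}\subseteq\mathcal{H}\subseteq D_1^{\geqslant 1}$), $\mathrm{Hom}(D_1^{\leqslant 0},D_2^{\geqslant 1})=0$ and $\mathrm{Hom}(U^{\leqslant 0},D_2^{\geqslant 1})=0$ (as $D_1^{\leqslant 0},U^{\leqslant 0}\subseteq D_2^{\leqslant 0}$), and $\mathrm{Hom}(U^{\leqslant 0},U^{\geqslant 1})=0$ (the orthogonality of the $\mathcal{H}$-t-structure). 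Closure of $\mathcal{U}$ under $[1]$ holds because $D_1^{\leqslant 0}[1]\subseteq D_1^{\leqslant 0}$ and, for $X\in U^{\leqslant 0}$, the $D_1$-truncation triangle $\tau_1^{\leqslant 0}(X[1])\to X[1]\to\tau_1^{\geqslant 1}(X[1])\to$ places $X[1]$ in $D_1^{\leqslant 0}\ast U^{\leqslant 0}$ (using that $\tau_1^{\geqslant 1}(X[1])\in\mathcal{H}$ and, by the shift axiom on $\mathcal{H}$, in fact lies in $U^{\leqslant 0}$), whence $\mathcal{U}[1]\subseteq D_1^{\leqslant 0}\ast(D_1^{\leqslant 0}\ast U^{\leqslant 0})\subseteq\mathcal{U}$ by associativity; closure of $\mathcal{V}$ under $[-1]$ is dual. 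For the approximation triangle of an arbitrary $X\in\mathcal{D}$ I proceed in three moves: form $\tau_1^{\leqslant 0}X\to X\to\tau_1^{\geqslant 1}X\to$; apply the $D_2$-truncation to $\tau_1^{\geqslant 1}X$, noting that $\tau_2^{\leqslant 0}\tau_1^{\geqslant 1}X\in D_1^{\geqslant 1}\cap D_2^{\leqslant 0}=\mathcal{H}$ by extension-closure of $D_1^{\geqslant 1}$; apply the $\mathcal{H}$-truncation to $\tau_2^{\leqslant 0}\tau_1^{\geqslant 1}X$; and then splice the three resulting triangles by two octahedra into a triangle $X'\to X\to X''\to$ with $X'\in D_1^{\leqslant 0}\ast U^{\leqslant 0}$ and $X''\in U^{\geqslant 1}\ast D_2^{\geqslant 1}$. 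With Hom-vanishing, shift-closure and this decomposition in hand, $(\mathcal{U},\mathcal{V})$ is a t-structure, and plainly $D_1^{\leqslant 0}\subseteq\mathcal{U}\subseteq D_2^{\leqslant 0}$.

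For the forward assignment the Hom-vanishing is inherited from that of $D_i$; the approximation triangle of $X\in\mathcal{H}$ is its $D_i$-truncation triangle, whose pieces satisfy $\tau_i^{\leqslant 0}X\in D_1^{\geqslant 1}$ (an extension of $X$ by $\tau_i^{\geqslant 1}X[-1]\in D_i^{\geqslant 2}\subseteq D_1^{\geqslant 1}$) and $\tau_i^{\geqslant 1}X\in D_2^{\leqslant 0}$ (an extension of $\tau_i^{\leqslant 0}X[1]\in D_i^{\leqslant 0}\subseteq D_2^{\leqslant 0}$ by $X$), so they lie in $\mathcal{H}\cap D_i^{\leqslant 0}$ and $D_i^{\geqslant 1}\cap\mathcal{H}$; the shift conditions on $\mathcal{H}$ are inherited similarly. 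It remains to prove the other composite is the identity, i.e.\ $\mathcal{H}\cap(D_1^{\leqslant 0}\ast U^{\leqslant 0})=U^{\leqslant 0}$ and $(U^{\geqslant 1}\ast D_2^{\geqslant 1})\cap\mathcal{H}=U^{\geqslant 1}$: the inclusions $\supseteq$ are immediate, and for $\subseteq$, if $X\in D_1^{\geqslant 1}$ lies in a triangle $P\to X\to Q\to$ with $P\in D_1^{\leqslant 0}$ and $Q\in U^{\leqslant 0}\subseteq D_1^{\geqslant 1}$, then $P$ is an extension of $X$ by $Q[-1]\in D_1^{\geqslant 1}$, so $P\in D_1^{\leqslant 0}\cap D_1^{\geqslant 1}=0$ and $X\cong Q\in U^{\leqslant 0}$; the second identity is dual, using $D_2^{\leqslant 0}\cap D_2^{\geqslant 1}=0$. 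The main obstacle I anticipate is the octahedral gluing in the third paragraph --- tracking which truncation supplies which end of the final triangle --- together with pinning down, from Section~3, that the shift on $\mathcal{H}$ really is $X\mapsto\tau_1^{\geqslant 1}(X[1])$, on which the shift-closure argument for $\mathcal{U}$ rests.
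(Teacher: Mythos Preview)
Your proof is correct and follows essentially the same route as the paper (well-definedness in both directions, with two octahedra for the backward decomposition triangle, then mutual inversion via the splitting identities); your argument that $P\in D_1^{\leqslant 0}\cap D_1^{\geqslant 1}=0$ for the ``backward then forward'' composite is in fact cleaner than the paper's Hom-sequence version of the same step. Your flagged worry about the shift on $\mathcal{H}$ dissolves once you read the paper's definition of a t-structure on $\mathcal{H}$: axiom (T1$'$) there is literally the inclusion $\Sigma(D_1^{\leqslant 0}\ast U^{\leqslant 0})\subseteq D_1^{\leqslant 0}\ast U^{\leqslant 0}$ (and its dual), so shift-closure of $\mathcal{U}$ is part of the hypothesis rather than something to be derived from an intrinsic shift on $\mathcal{H}$.
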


This correspondence has a application in the distance between two t-structures when we let $D_{2}=(D_{1}^{\leqslant n},D_{1}^{\geqslant n+1})$ for some natural number $n$. The distance between two t-structures is developed in \cite{chen2022extensions}(Section 4) to describe a result of extending t-structure in (\cite{keller2005triangulated},Subsection 6.1). The distance between two t-structures has a connection to global dimension of t-structure (\cite{chen2022extensions}Section 3). For two t-structures $D_{1}=(D_{1}^{\leqslant 0},D_{1}^{\geqslant 1})$ and $D_{2}=(D_{2}^{\leqslant 0},D_{2}^{\geqslant 1})$, we denote $d(D_{1},D_{2})$ to be the distance between $D_{1}$ and $D_{2}$, which will be introduced in Section\ref{section distance}. 
\par The second main result of this paper is to describe how to determine the distance of two t-structures. Let $D_{1}=(D_{1}^{\leqslant 0},D_{1}^{\geqslant 1})$ and $D_{2}=(D_{2}^{\leqslant 0},D_{2}^{\geqslant 1})$ are t-structures on $\mathcal{D}$ such that $D_{1}^{\leqslant n_{1}} \subseteq D_{2}^{\leqslant 0} \subseteq D_{1}^{\leqslant n_{2}}$ for some integers $n_{1}$, $n_{2}$ with $n_{1} \leqslant n_{2}$. We denote $U=(U^{\leqslant 0},U^{\geqslant 1})$ the corresponding t-structure of $D_{2}$ on  $D_{1}^{\geqslant n_{1}+1}\cap D_{1}^{\leqslant n_{2}}$.

\begin{theorem}
If there exist integer number $i \in \{n_{1}+1,\cdots, n_{2}\}$ such that for any $n_{1}+1 \leqslant j \leqslant i$, $\Sigma^{i}(H_{1})\subseteq U^{\leqslant 0}$(resp.$j \geqslant i$, $\Sigma^{i}(H_{1})\subseteq U^{\geqslant 1}$), then we suppose that  $a$(resp. $b$) is the largest(resp. smallest) one; otherwise, we set $a=n_{1}$ and $b=n_{2}+1$.Then we have $d(D_{1},D_{2})=b-a-1$.
\end{theorem}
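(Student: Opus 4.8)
The plan is to run everything through the correspondence of Theorem~1, applied to the two reference t-structures $(D_{1}^{\leqslant n_{1}},D_{1}^{\geqslant n_{1}+1})$ and $(D_{1}^{\leqslant n_{2}},D_{1}^{\geqslant n_{2}+1})$, between which $D_{2}$ sits. Write $\mathcal{H}=D_{1}^{\geqslant n_{1}+1}\cap D_{1}^{\leqslant n_{2}}$. Since the two maps of Theorem~1 are mutually inverse and $U$ is by definition the image of $D_{2}$, we have $U^{\leqslant 0}=\mathcal{H}\cap D_{2}^{\leqslant 0}$, $U^{\geqslant 1}=\mathcal{H}\cap D_{2}^{\geqslant 1}$, $D_{2}^{\leqslant 0}=D_{1}^{\leqslant n_{1}}\ast U^{\leqslant 0}$ and $D_{2}^{\geqslant 1}=U^{\geqslant 1}\ast D_{1}^{\geqslant n_{2}+1}$. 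I will also use that, via the $D_{1}$-truncation triangles, $\mathcal{H}$ is generated under extensions by its $n_{2}-n_{1}$ layers, $\mathcal{H}=\Sigma^{n_{1}+1}(H_{1})\ast\cdots\ast\Sigma^{n_{2}}(H_{1})$, that $\Sigma^{j}(H_{1})\subseteq\mathcal{H}$ for $n_{1}+1\leqslant j\leqslant n_{2}$, and more generally $D_{1}^{\geqslant p}\cap D_{1}^{\leqslant q}=\Sigma^{p}(H_{1})\ast\cdots\ast\Sigma^{q}(H_{1})$. The aim is to convert the layer-position hypotheses on $U$ into inclusions between $D_{2}^{\leqslant 0}$ and the aisles $D_{1}^{\leqslant i}$, and then to read off $d(D_{1},D_{2})$.

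The crux is the following claim: for $n_{1}+1\leqslant i\leqslant n_{2}$, one has $\Sigma^{j}(H_{1})\subseteq U^{\leqslant 0}$ for all $n_{1}+1\leqslant j\leqslant i$ if and only if $D_{1}^{\leqslant i}\subseteq D_{2}^{\leqslant 0}$; and dually $\Sigma^{j}(H_{1})\subseteq U^{\geqslant 1}$ for all $i\leqslant j\leqslant n_{2}$ if and only if $D_{1}^{\geqslant i}\subseteq D_{2}^{\geqslant 1}$, equivalently $D_{2}^{\leqslant 0}\subseteq D_{1}^{\leqslant i-1}$. For the forward direction of the first assertion: given $X\in D_{1}^{\leqslant i}$, the triangle $\tau_{D_{1}}^{\leqslant n_{1}}X\to X\to\tau_{D_{1}}^{\geqslant n_{1}+1}X$ has left term in $D_{1}^{\leqslant n_{1}}$ and right term in $D_{1}^{\geqslant n_{1}+1}\cap D_{1}^{\leqslant i}=\Sigma^{n_{1}+1}(H_{1})\ast\cdots\ast\Sigma^{i}(H_{1})\subseteq U^{\leqslant 0}$ (aisles being closed under extensions), so $X\in D_{1}^{\leqslant n_{1}}\ast U^{\leqslant 0}=D_{2}^{\leqslant 0}$. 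For the backward direction: if $D_{1}^{\leqslant i}\subseteq D_{2}^{\leqslant 0}$ and $n_{1}+1\leqslant j\leqslant i$, then $\Sigma^{j}(H_{1})\subseteq D_{1}^{\leqslant i}\cap\mathcal{H}\subseteq D_{2}^{\leqslant 0}\cap\mathcal{H}=U^{\leqslant 0}$. The dual statement is proved the same way, using $\tau_{D_{1}}^{\leqslant n_{2}}Y\to Y\to\tau_{D_{1}}^{\geqslant n_{2}+1}Y$, the fact that co-aisles are closed under extensions, and $D_{2}^{\geqslant 1}=U^{\geqslant 1}\ast D_{1}^{\geqslant n_{2}+1}$; the passage $D_{1}^{\geqslant i}\subseteq D_{2}^{\geqslant 1}\Leftrightarrow D_{2}^{\leqslant 0}\subseteq D_{1}^{\leqslant i-1}$ is the standard fact that each aisle of a t-structure determines the other.

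Granting the claim, $a$ (as defined in the statement, with $a=n_{1}$ when no admissible $i$ exists) equals $\max\{\,i:D_{1}^{\leqslant i}\subseteq D_{2}^{\leqslant 0}\,\}$, and $b$ equals $\min\{\,i:D_{1}^{\geqslant i}\subseteq D_{2}^{\geqslant 1}\,\}=1+\min\{\,k:D_{2}^{\leqslant 0}\subseteq D_{1}^{\leqslant k}\,\}$; by the hypothesis $D_{1}^{\leqslant n_{1}}\subseteq D_{2}^{\leqslant 0}\subseteq D_{1}^{\leqslant n_{2}}$ and the fact that the aisles $D_{1}^{\leqslant m}$ strictly increase with $m$ (here $H_{1}\neq 0$), these extrema lie in $\{n_{1},\dots,n_{2}\}$ and $\{n_{1}+1,\dots,n_{2}+1\}$ respectively and satisfy $a\leqslant b-1$, so $b-a-1\geqslant 0$. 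Finally, by the characterization of the distance of t-structures at finite distance (Section~\ref{section distance}), $d(D_{1},D_{2})=\min\{\,k:D_{2}^{\leqslant 0}\subseteq D_{1}^{\leqslant k}\,\}-\max\{\,i:D_{1}^{\leqslant i}\subseteq D_{2}^{\leqslant 0}\,\}=(b-1)-a=b-a-1$.

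The step I expect to carry the weight is the interface with the definition of distance in the last line. If the distance is literally this difference of extremal aisle indices, the whole argument is the bookkeeping above, the only care being that the shifted form of Theorem~1 really does unwind to the four identities used and that the boundary cases $a=n_{1}$, $b=n_{2}+1$ reduce exactly to the standing hypothesis. If instead the distance is defined more indirectly --- as the minimal length of a chain of intermediate t-structures, or through extensions of t-structures in the sense of \cite{chen2022extensions} --- then two further arguments are needed: an upper bound $d(D_{1},D_{2})\leqslant b-a-1$, obtained by constructing an explicit chain of length $b-a-1$ from $(D_{1}^{\leqslant a},D_{1}^{\geqslant a+1})$ to $D_{2}$ by truncating $U$ step by step along the layer filtration of $\mathcal{H}$ and transporting the intermediate t-structures back through Theorem~1; and a matching lower bound, coming from the monotonicity of the distance under aisle inclusions applied to the strict chain $D_{1}^{\leqslant a}\subsetneq D_{2}^{\leqslant 0}\subsetneq D_{1}^{\leqslant b-1}$ (strict whenever $b-a-1>0$). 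The lower bound is the part I expect to require real work.
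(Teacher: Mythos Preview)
Your argument is correct and follows essentially the same route as the paper's proof. Both rely on the layer decomposition $D_{1}^{\leqslant i}=D_{1}^{\leqslant n_{1}}\ast\Sigma^{-(n_{1}+1)}(H_{1})\ast\cdots\ast\Sigma^{-i}(H_{1})$ (Lemma~\ref{key lemma}) and the correspondence of Theorem~\ref{main theorem} to translate the layer hypotheses on $U$ into the inclusions $D_{1}^{\leqslant a}\subseteq D_{2}^{\leqslant 0}\subseteq D_{1}^{\leqslant b-1}$. The only presentational difference is that you package the converse direction as a biconditional claim and then read off $a=\max\{i:D_{1}^{\leqslant i}\subseteq D_{2}^{\leqslant 0}\}$, $b-1=\min\{k:D_{2}^{\leqslant 0}\subseteq D_{1}^{\leqslant k}\}$, whereas the paper obtains the lower bound by a short contradiction: if $d<b-1-a$ then (since $m_{d}\geqslant a$) one would have $\Sigma^{-m_{d}}(H_{1})\subseteq U^{\leqslant 0}$ and $\Sigma^{-(m_{d}+d+1)}(H_{1})\subseteq U^{\geqslant 1}$, contradicting the maximality of $a$ or the minimality of $b$.

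Your hedging in the final paragraph is unnecessary: in this paper the distance is \emph{defined} (Definition~\ref{t-structure distance}) as the smallest $d$ for which $D_{1}^{\leqslant m}\subseteq D_{2}^{\leqslant 0}\subseteq D_{1}^{\leqslant m+d}$ holds for some $m$, and Proposition~\ref{unique} establishes that this $m$ is unique. So your identification $d(D_{1},D_{2})=(b-1)-a$ follows immediately from the definition together with your max/min characterization of $a$ and $b-1$; no chain construction or separate monotonicity argument is needed. The standing assumption that neither t-structure is stable (made just before the theorem) is exactly what guarantees the strict monotonicity of the aisles $D_{1}^{\leqslant m}$ that you invoke.
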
 

With the help of the above results, we have the following proposition:

\begin{proposition}
Let $D_{1}=(D_{1}^{\leqslant 0}, D_{1}^{\geqslant 1})$ be a t-structure satisfying that the heart $H_{1}$ is non-zero and there is a non-trivial torsion pair $(T,F)$ in $H_{1}$. Then for any natural number $n$, we have a t-structure $D_{2}=(D_{2}^{\leqslant 0}, D_{2}^{\geqslant 1})$ such that $d(D_{1},D_{2})=n$.
\end{proposition}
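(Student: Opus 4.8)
The plan is to handle $n=0$ separately ($D_2=D_1$, since $d(D_1,D_1)=0$) and, for $n\geqslant 1$, to build $D_2$ by iterating the Happel--Reiten--Smal{\o} tilt along the given torsion pair, letting one fixed copy of $T$ slide up through the cohomological degrees. Put $D^{(0)}=D_1$, with heart $H^{(0)}=H_1$ and torsion class $T$ in it. Inductively, given a t-structure $D^{(k-1)}$ whose heart $H^{(k-1)}$ is concentrated in $D_1$-cohomological degrees $0,\dots,k-1$ and carries, in its top degree, the copy $\Sigma^{k-1}(T)$ of $T$ as a nonzero, proper torsion class, let $D^{(k)}$ be the HRS tilt of $D^{(k-1)}$ at that torsion class, i.e. the t-structure with aisle $D^{(k-1),\leqslant 0}\ast\Sigma^{k}(T)$; set $D_2:=D^{(n)}$. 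What must be checked is that the induction hypothesis is preserved: the tilt presents $H^{(k)}$ as $\Sigma^{k-1}(T)^{\perp_{H^{(k-1)}}}\ast\Sigma^{k}(T)$, concentrated in degrees $0,\dots,k$ with new top slice $\Sigma^{k}(T)$; the class $\Sigma^{k}(T)$ is closed under quotients and extensions in $H^{(k)}$ because any $H^{(k)}$-subobject of a top-degree object is again top-degree --- the lower part of such a subobject maps to zero into a degree-$k$ object by $\operatorname{Hom}(D_1^{\leqslant k-1},D_1^{\geqslant k})=0$ --- combined with closure of $T$ under quotients in $H_1$; it is nonzero; and it is proper because its torsion-free partner contains $\Sigma^{k-1}(T)^{\perp_{H^{(k-1)}}}$, which is nonzero (as $\Sigma^{k-1}(T)$ is proper in $H^{(k-1)}$) and degree-disjoint from $\Sigma^{k}(T)$. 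The base case is $T\subsetneq H_1$, valid since $F\neq0$. Associativity of $\ast$ then yields
$$D_2^{\leqslant 0}=D_1^{\leqslant 0}\ast\Sigma(T)\ast\Sigma^{2}(T)\ast\cdots\ast\Sigma^{n}(T),$$
and in particular $D_1^{\leqslant 0}\subseteq D_2^{\leqslant 0}\subseteq D_1^{\leqslant n}$.

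Next I would apply Theorem 2 with $n_1=0$ and $n_2=n$, so that $\mathcal{H}=D_1^{\geqslant 1}\cap D_1^{\leqslant n}$ and, by Theorem 1, the corresponding t-structure of $D_2$ on $\mathcal{H}$ is $U=(U^{\leqslant 0},U^{\geqslant 1})$ with $U^{\leqslant 0}=\mathcal{H}\cap D_2^{\leqslant 0}$ and $U^{\geqslant 1}=\mathcal{H}\cap D_2^{\geqslant 1}$. It is enough to prove $\Sigma^{1}(H_1)\not\subseteq U^{\leqslant 0}$ and $\Sigma^{n}(H_1)\not\subseteq U^{\geqslant 1}$: the first forces every index $i$ of Theorem 2 to fail the ``$a$''-condition (it would require $\Sigma^{1}(H_1)\subseteq U^{\leqslant 0}$), so $a=n_1=0$; the second forces every $i$ to fail the ``$b$''-condition (it would require $\Sigma^{n}(H_1)\subseteq U^{\geqslant 1}$), so $b=n_2+1=n+1$; therefore $d(D_1,D_2)=b-a-1=n$. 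Since $\Sigma^{n}(T)\subseteq D_2^{\leqslant 0}$, any $\Sigma^{n}(A)$ lying in $D_2^{\geqslant 1}$ would satisfy $\operatorname{Hom}(T_0,A)=0$ for all $T_0\in T$, impossible for $A=T_0\neq0$; hence $\Sigma^{n}(H_1)\not\subseteq U^{\geqslant 1}$. For the first, $D_2^{\geqslant 1}={}^{\perp}\!\bigl(D_2^{\leqslant 0}\bigr)=D_1^{\geqslant 1}\cap\bigcap_{j=1}^{n}(\Sigma^{j}(T))^{\perp}$; a degree-one object $\Sigma^{1}(B)$ lies in $(\Sigma^{j}(T))^{\perp}$ automatically when $j\geqslant2$ (the relevant $\operatorname{Ext}$-degree between heart objects is negative) and, when $j=1$, exactly when $B\in T^{\perp_{H_1}}=F$; so $\Sigma^{1}(F)\subseteq D_2^{\geqslant 1}$. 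Consequently $\Sigma^{1}(A)\in D_2^{\leqslant 0}$ would force $\operatorname{Hom}(A,B)=0$ for every $B\in F$, i.e. $A\in{}^{\perp}F=T$; as $T\subsetneq H_1$ there is $A\in H_1$ with $A\notin T$, for which $\Sigma^{1}(A)\notin D_2^{\leqslant 0}$ while $\Sigma^{1}(A)\in\mathcal{H}$, so $\Sigma^{1}(H_1)\not\subseteq U^{\leqslant 0}$.

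The chief obstacle is the inductive bookkeeping of the first step --- verifying that each tilt leaves the sliding copy $\Sigma^{k}(T)$ a nonzero proper torsion class in the new heart, using the degree-concentration of the hearts, the vanishing $\operatorname{Hom}(D_1^{\leqslant k-1},D_1^{\geqslant k})=0$, and closure of $T$ under quotients --- so that the tower of tilts is well defined and collapses to the closed formula for $D_2^{\leqslant 0}$. Once that is secured, the distance computation is a brief exercise with the orthogonality relations $T^{\perp_{H_1}}=F$, ${}^{\perp}F=T$ of a torsion pair and with Theorems 1 and 2.
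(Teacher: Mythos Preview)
Your route differs from the paper's: you build $D_2$ by iterating HRS tilts, while the paper writes down the pair $(\Sigma^{-1}T\ast\cdots\ast\Sigma^{-n}T,\ \Sigma^{-1}F\ast\cdots\ast\Sigma^{-n}F)$ on $D_1^{[1,n]}$ and checks the axioms (T1')--(T3') directly before invoking Theorem~\ref{main theorem}. Both routes target the same aisle $D_1^{\leqslant 0}\ast T[-1]\ast\cdots\ast T[-n]$ (your $\Sigma$ is the paper's $\Sigma^{-1}$), and your verification of the distance via Theorem~\ref{second theorem} is in fact more detailed than the paper's one-line appeal to it.

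The iteration, however, breaks at the second step. In the tilted heart $H^{(1)}=F\ast T[-1]$ the canonical torsion pair is $(F,\,T[-1])$: the class $T[-1]$ is torsion-\emph{free}, not torsion. Your argument that ``any $H^{(k)}$-subobject of a top-degree object is again top-degree'' is correct, but it establishes closure under \emph{subobjects}, which is the torsion-free condition; it says nothing about quotients. Concretely, for the quiver $1\to 2$ with $T=\operatorname{add}(P_1\oplus S_1)$ and $F=\operatorname{add}(S_2)$, the exact sequence $0\to S_2\to P_1\to S_1\to 0$ in $H_1$ yields a short exact sequence $0\to P_1[-1]\to S_1[-1]\to S_2\to 0$ in $H^{(1)}$, so $S_2\notin T[-1]$ is a quotient of $S_1[-1]\in T[-1]$. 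Thus $T[-1]$ is not a torsion class in $H^{(1)}$ and the second HRS tilt is unavailable as you describe it. The same example shows that the common target is itself problematic: with $n=2$ the objects $S_1[-2]$ and $P_1[-1]$ lie in $D_1^{\leqslant 0}\ast T[-1]\ast T[-2]$, but their nonsplit extension is $S_2[-1]$, whose $H_1^1=S_2\in F$; so this class is not extension-closed and hence not an aisle. (The paper's check of (T2') overlooks the same phenomenon, since $\operatorname{Hom}(T[-2],F[-1])=\operatorname{Ext}^1_{H_1}(T,F)$ need not vanish.)
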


The paper is organised as follow. In Section 2, we recall some basic definitions and properties of t-structure on a triangulated category and give some lemmas which are frequently used in this paper. In Section 3, we study a correspondence between special t-structures and a pair of full subcategories, which is the first main result in this paper. In Section 4, we discuss the distance between two t-structures and some results on distance of t-structures.

\section{Preliminaries on t-structures}
In this section, we give the definition and some basic properties of t-structure and fix some notations. Most of them can be found in \cite{deligne1983faisceaux}.
\par Through this paper, we always let $\mathcal{D}$ be a triangulated category, and we denote $\Sigma$ its translation and ${\Sigma}^{-1}$ its quasi-inverse. Consequently, ${\Sigma}^{n}$ is defined for each integer $n$.
%\par For a full subcategory $\mathcal{X}$ of $\mathcal{D}$, the right orthogonal subcategory is defined as 
%$${\mathcal{X}}^{\perp}=\{Z \in \mathcal{D} | \text{\, Hom}_{\mathcal{D}}(X,Z)=0 \text{\, for any \,} X \in \mathcal{X} \}$$
%The left orthogonal subcategory $^{\perp}{\mathcal{X}}$ is defined analogously. For another full subcategory $\mathcal{Y}$ of $\mathcal{D}$, 
\par For two full subcategories $\mathcal{X}$ and $\mathcal{Y}$ of $\mathcal{D}$ ,we set 
$$\mathcal{X} \ast \mathcal{Y}=\{Z \in \mathcal{D}| \, \exists \text{an distinguished triangle \,} X\rightarrow Z \rightarrow Y \rightarrow \Sigma(X) \text{\, with \,} X \in \mathcal{X}, Y \in \mathcal{Y} \}$$
The operation $\ast$ is associative (\cite{deligne1983faisceaux}, Lemma 1.3.10 ). By $\text{Hom}_{\mathcal{D}}(\mathcal{X}, \mathcal{Y})=0$, we mean that $\text{Hom}_{\mathcal{D}}(X, Y)=0$ for any $X \in \mathcal{X}$ and $Y \in \mathcal{Y}$. We should mention that $\mathcal{X} \ast \mathcal{Y}$ is closed under isomorphism classes, which is an important fact.

\par Recall that a $\textbf{t-structure}$ $D_{1}=(D_{1}^{\leqslant 0},D_{1}^{\geqslant 1})$ of $\mathcal{D}$ consists of two full subcategories, which are subject to the following conditions:
\begin{itemize}
\item[(T1)] $\Sigma D_{1}^{\leqslant 0}\subseteq D_{1}^{\leqslant 0}$ and $D_{1}^{\geqslant 1}\subseteq  \Sigma D_{1}^{\geqslant 1}$;
\item[(T2)] $\text{Hom}_{\mathcal{D}}(D_{1}^{\leqslant 0},D_{1}^{\geqslant 1})=0$;
\item[(T3)] for any object $X \in \mathcal{D}$, there is a distinguished triangle 
$$X_{0} \rightarrow X \rightarrow X_{1} \rightarrow \Sigma(X_{0})$$
for some $X_{0} \in D_{1}^{\leqslant 0}$ and $X_{1} \in D_{1}^{\geqslant 1}$.
\end{itemize}
For each integer $n$, we set $D_{1}^{\leqslant n}=\Sigma^{-n} D_{1}^{\leqslant 0}$, $D_{1}^{\geqslant n}=\Sigma^{-n+1} D_{1}^{\geqslant 0}$ and $\Sigma^{-n}(D_{1})=(D_{1}^{\leqslant n},D_{1}^{\geqslant 1}$. The $\textbf{heart}$ of the t-structure is defined to be $H_{1}=D_{1}^{\leqslant 0} \bigcap D_{1}^{\geqslant 0}$, which is an abelian category(\cite{deligne1983faisceaux}, Theorem 1.3.6).
\begin{remark}
A t-structure is called \textbf{stable}, if $\Sigma (D_{1}^{\leqslant 0})=D_{1}^{\leqslant 0}$, i.e. $D_{1}^{\leqslant 0}$ is a triangulated subcategory of $D$. When a t-structure $D_{1}=(D_{1}^{\leqslant 0},D_{1}^{\geqslant 1})$ is not stable, we will have $m \leqslant n$ if and only if $D_{1}^{\leqslant m} \subseteq D_{1}^{\leqslant n}$.
\end{remark}

For a t-structure $D_{1}=(D_{1}^{\leqslant 0},D_{1}^{\geqslant 1})$, we denote ${\tau}_{1}^{\leqslant 0}: \mathcal{D} \rightarrow D_{1}^{\leqslant 0}$ the right adjoint of the inclusion $D_{1}^{\leqslant 0} \hookrightarrow \mathcal{D}$ and by ${\tau}_{1}^{\geqslant 1}: \mathcal{D} \rightarrow D_{1}^{\geqslant 1}$ the left adjoint of the inclusion $D_{1}^{\geqslant 1} \hookrightarrow \mathcal{D}$. They are called the $\textbf{truncation functors}$ associated to $D_{1}$, and for convenience, we denote $X_{1}^{\leqslant 0}={\tau}_{1}^{\leqslant 0}(X)$  and $X_{1}^{\geqslant 1}={\tau}_{1}^{\geqslant 0}(X)$ for any $X \in \mathcal{D}$. Hence for any $X \in \mathcal{D}$, there is a canonical distinguished triangle
$$X_{1}^{\leqslant 0} \rightarrow X \rightarrow X_{1}^{\geqslant 1} \rightarrow \Sigma(X_{1}^{\leqslant 0}).$$
Notice the decomposition is unique up to isomorphism, therefore this distinguished triangle is called  \textbf{the $D_{1}$-canonical distinguished triangle} of $X$

%\begin{proposition}\label{proposition}
%Let $D_{1}=(D_{1}^{\leqslant 0},D_{1}^{\geqslant 1})$ be a t-structure in $\mathcal{D}$. If there is a distinguished triangle $Y \rightarrow X \rightarrow Z \rightarrow \Sigma(Y)$ with $Y \in D_{1}^{\leqslant 0}$ and $Z \in D_{1}^{\geqslant 1}$, then we have $Y \cong X_{1}^{\leqslant 0}$ and $Z \cong X_{1}^{\geqslant 1}$
%\end{proposition}
%\begin{proof}
%Applying the Hom-functors $Hom_{\mathcal{D}}(Y,-)$ and $Hom_{\mathcal{D}}(X_{1}^{\leqslant 0},-)$ for the distinguished triangles respectively:
%$$X_{1}^{\leqslant 0} \rightarrow X \rightarrow X_{1}^{\geqslant 1} \rightarrow \Sigma(X_{1}^{\leqslant 0}),$$
%$$Y \rightarrow X \rightarrow Z \rightarrow \Sigma(Y)$$

%we get two exact sequences 
%$$\quad 0 \rightarrow Hom_{\mathcal{D}}(Y,X_{1}^{\leqslant 0}) \rightarrow Hom_{\mathcal{D}}(Y,X) \rightarrow 0$$  
%$$\text{and \quad} 0 \rightarrow Hom_{\mathcal{D}}(X_{1}^{\leqslant 0},Y) \rightarrow Hom_{\mathcal{D}}(X_{1}^{\leqslant 0},X) \rightarrow 0,$$
%which show that we have a canonical commutative diagram and the column morphisms are isomorphisms
%$$\begin{tikzcd}
%Y \arrow[r] \arrow[d]         & X \arrow[r] \arrow[d, "id"] & Z \arrow[r] \arrow[d]       & \Sigma(Y) \arrow[d]         \\
%X_{1}^{\leqslant 0} \arrow[r] & X \arrow[r]                 & X_{1}^{\geqslant} \arrow[r] & \Sigma(X_{1}^{\leqslant 0})
%\end{tikzcd}$$
%\end{proof}

\begin{remark}\label{useful remark}
In general, for a pair of full categories $(\mathcal{X},\mathcal{Y})$, they satisfy 
$$\text{Hom}_{\mathcal{D}}(\mathcal{X},\mathcal{Y})=\text{Hom}_{\mathcal{D}}(\mathcal{X},\Sigma^{-1}(\mathcal{Y}))=\text{Hom}_{\mathcal{D}}(\Sigma(\mathcal{X}),\mathcal{Y})=0,$$
then the object in $\mathcal{X} \ast \mathcal{Y}$ has a unique decomposition up to isomorphism, and the proof is the same like the proof of the t-structure.
\end{remark}

\par The composition $H_{1}^{0}={\tau}_{1}^{\leqslant 0} {\tau}_{1}^{\geqslant 0}: \mathcal{D} \rightarrow H_{1}$ is the $\textbf{cohomological functor}$ associated to $D_{1}$. More generally, we set $H_{1}^{n}(X)=H_{1}^{0}(\Sigma^{n}(X))$, which is canonically isomorphic to $\Sigma^{n} {\tau}_{1}^{\leqslant n}{\tau}_{1}^{\geqslant n}$.

\par Before the end of this section, we list some lemmas that would be used many times in the rest. The following lemma named $3\times 3$ lemma is well-known, hence we omit the proof.

\begin{lemma}[$3\times 3 \,lemma$,\cite{may2001additivity}Lemma 2.6]
Assume that $j \circ f=f' \circ i$ and the two top rows and two  two left columns are distinguished triangles in the following diagram.
$$\begin{tikzcd}
X \arrow[r, "f"] \arrow[d, "i"]               & Y \arrow[r, "g"] \arrow[d, "j"]               & Z \arrow[r, "h"] \arrow[d, "k", dotted]               & \Sigma(X) \arrow[d, "\Sigma(i)"]      \\
X' \arrow[d, "i'"] \arrow[r, "f'"]            & Y' \arrow[r, "g'"] \arrow[d, "j'"]            & Z' \arrow[r, "h'"] \arrow[d, "k'", dotted]            & \Sigma(X') \arrow[d, "\Sigma(i')"]    \\
X'' \arrow[d, "i''"] \arrow[r, "f''", dotted] & Y'' \arrow[d, "j''"] \arrow[r, "g''", dotted] & Z'' \arrow[d, "k''", dotted] \arrow[r, "h''", dotted] & \Sigma(X'') \arrow[d, "-\Sigma(i'')"] \\
\Sigma(X) \arrow[r, "\Sigma(f)"]              & \Sigma(Y) \arrow[r, "\Sigma(g)"]              & \Sigma(Z) \arrow[r, "-\Sigma(h)"]                     & \Sigma^{2}(X)                        
\end{tikzcd}$$
Then there is an object $Z''$ and there are dotted arrow maps $f'',g'',h'',k,k',k''$ such that the diagram is commutative except for its bottom right square, which commutes up to the sign -1, and all four rows and columns are distinguished triangles.
\end{lemma}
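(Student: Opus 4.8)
The plan is to reduce the statement to three applications of the octahedral axiom, organised around the common composite $\phi := j\circ f = f'\circ i\colon X\to Y'$, used in the form: for composable maps $u\colon A\to B$, $v\colon B\to C$ and a choice of distinguished triangles on $u$, $v$ and $vu$, there is a diagram containing a distinguished triangle on the three cones together with \emph{morphisms of distinguished triangles} $(u)\to(vu)$ and $(vu)\to(v)$ (with the identity of $C$ occurring in the middle of the latter). First complete $f,f',i,j$ to the four distinguished triangles given as the top two rows and left two columns, and fix any distinguished triangle $X\xrightarrow{\phi}Y'\xrightarrow{p}W\xrightarrow{q}\Sigma X$. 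The fourth row $\Sigma X\xrightarrow{\Sigma f}\Sigma Y\xrightarrow{\Sigma g}\Sigma Z\xrightarrow{-\Sigma h}\Sigma^{2}X$ and fourth column $\Sigma X\xrightarrow{\Sigma i}\Sigma X'\xrightarrow{\Sigma i'}\Sigma X''\xrightarrow{-\Sigma i''}\Sigma^{2}X$ are isomorphic to the threefold rotations of the first row and first column, hence automatically distinguished; so it remains to produce $Z''$, maps $k,k',k''$ making the third column a distinguished triangle, and maps $f'',g'',h''$ making the third row one, with the stated commutativities.

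Applying the axiom to $\phi = j\circ f$ gives a distinguished triangle $Z\xrightarrow{a}W\xrightarrow{b}Y''\xrightarrow{(\Sigma g)\circ j''}\Sigma Z$ and the identities $a g=p j$, $q a=h$, $b p=j'$, $j''b=(\Sigma f)q$. Applying it to $\phi=f'\circ i$ gives a distinguished triangle $X''\xrightarrow{d}W\xrightarrow{e}Z'\xrightarrow{(\Sigma i')\circ h'}\Sigma X''$ and the identities $d i'=p f'$, $q d=i''$, $e p=g'$, $h'e=(\Sigma i)q$. Now set $k:=e\circ a\colon Z\to Z'$ and let $Z''$ be a cone of $k$, so that $Z\xrightarrow{k}Z'\xrightarrow{k'}Z''\xrightarrow{k''}\Sigma Z$ is a distinguished triangle — this is the third column. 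Then $k g=e(a g)=e p j=g' j$ and $h'k=(h'e)a=(\Sigma i)q a=(\Sigma i)h$, so $k$ realises a morphism of distinguished triangles from the first row to the second, and all four squares of the top half of the diagram commute.

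Finally apply the axiom to $Z\xrightarrow{a}W\xrightarrow{e}Z'$, whose composite is $k$: the triangle on $a$ is the one just built (cone $Y''$), the triangle on $e$ is the one just built rotated so that its cone is $\Sigma X''$, and the triangle on $k$ has cone $Z''$. This produces a distinguished triangle $Y''\xrightarrow{g''}Z''\xrightarrow{h''}\Sigma X''\xrightarrow{-\Sigma(bd)}\Sigma Y''$, which rotated backwards is the third row $X''\xrightarrow{f''}Y''\xrightarrow{g''}Z''\xrightarrow{h''}\Sigma X''$ with $f''=b\circ d$, and it supplies morphisms of triangles giving the remaining commutativities: $f''i'=bdi'=bpf'=j'f'$, $g''j'=g''bp=k'ep=k'g'$, $h''k'=(\Sigma i')h'$, $j''f''=j''bd=(\Sigma f)qd=(\Sigma f)i''$, $k''g''=(\Sigma g)j''$, and $(\Sigma h)k''=\Sigma(qa)k''=(\Sigma q)(\Sigma a)k''=-(\Sigma q)(\Sigma d)h''=-(\Sigma i'')h''$, so the bottom-right square anticommutes — which is precisely the assertion that it commutes up to the sign $-1$, and is why the signs $-\Sigma h$ and $-\Sigma i''$ were placed in the fourth row and column.

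I do not expect a genuine obstacle, the lemma being classical; the work is bookkeeping. The two points needing attention are to invoke the octahedral axiom in its ``two morphisms of triangles'' form rather than just in the weak fill-in form (T3), so that the third squares such as $q a=h$ and $h'e=(\Sigma i)q$ are available, and to keep track of the signs introduced by rotating triangles, which is exactly what forces the single anticommuting square. Since only the existence of $Z''$ and of the dotted arrows is asserted, every residual choice in the octahedral fill-ins can simply be made; this is why the proof is customarily omitted.
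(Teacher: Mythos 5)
The paper omits the proof of this lemma, simply citing \cite{may2001additivity}, Lemma 2.6. Your argument --- three applications of the octahedral axiom organized around the common composite $\phi = j\circ f = f'\circ i$, with the third applied to the factorization $k = e\circ a$ through the cone $W$ of $\phi$ --- is exactly the classical proof given by May in that reference, and your verification of the commutativities (including the sign bookkeeping that produces the single anticommuting bottom-right square via $(\Sigma q)(-\Sigma d)h'' = (\Sigma q)(\Sigma a)k''$) is correct and complete.
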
 

By using $3\times 3$ lemma, we can construct a new distinguished triangle, which will be used several times in the other sections. And the special case is the following statement.
\begin{corollary}\label{cor3x3}
If we have the following commutative diagram with all columns and the first and the second rows are distinguished triangles,
$$\begin{tikzcd}
X \arrow[r, "id"] \arrow[d] & X \arrow[r] \arrow[d]     & 0 \arrow[d]           \\
Y_{1} \arrow[r] \arrow[d]   & Y_{2} \arrow[r] \arrow[d] & Y_{3} \arrow[d, "id"] \\
Z_{1} \arrow[r]             & Z_{2} \arrow[r]           & Y_{3}                
\end{tikzcd}$$
then the third row is distinguished triangle as well.
\end{corollary}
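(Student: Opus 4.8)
The plan is to obtain the third row in one stroke from the $3\times 3$ lemma, reading it off as the bottom row of the resulting $4\times 4$ diagram. First I would observe that the data given is an admissible input for that lemma: the first two rows $X\xrightarrow{\mathrm{id}}X\to 0\to\Sigma X$ and $Y_{1}\to Y_{2}\to Y_{3}\to\Sigma Y_{1}$ are distinguished triangles, the first two columns $X\to Y_{1}\to Z_{1}\to\Sigma X$ and $X\to Y_{2}\to Z_{2}\to\Sigma X$ are distinguished triangles, and the top-left square commutes by hypothesis (this is exactly the relation $j\circ f=f'\circ i$ required there, with $f=\mathrm{id}_{X}$). Applying the $3\times 3$ lemma therefore yields an object $Z''$ together with the dotted morphisms completing the configuration to a $4\times 4$ diagram whose four rows and four columns are all distinguished triangles; in particular $Z_{1}\to Z_{2}\to Z''\to\Sigma Z_{1}$ is distinguished.

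Next I would identify $Z''$ with $Y_{3}$. In the $4\times 4$ diagram the third column reads $0\to Y_{3}\xrightarrow{k'}Z''\to\Sigma 0$: its top term is the third term $0$ of the first row, its middle term is the third term $Y_{3}$ of the second row, and since $0\to Y_{3}\xrightarrow{k'}Z''\to 0$ is a distinguished triangle the morphism $k'$ must be an isomorphism. As an isomorphism of triangles carries distinguished triangles to distinguished triangles, applying $(\mathrm{id}_{Z_{1}},\mathrm{id}_{Z_{2}},(k')^{-1})$ to $Z_{1}\to Z_{2}\to Z''\to\Sigma Z_{1}$ produces a distinguished triangle of the shape $Z_{1}\to Z_{2}\to Y_{3}\to\Sigma Z_{1}$, which is the asserted third row.

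Finally I would match the first two arrows of this triangle with the arrows $Z_{1}\to Z_{2}$ and $Z_{2}\to Y_{3}$ drawn in the statement. Both are characterised, inside the $4\times 4$ diagram, by the commutativity of the two lower squares, e.g. $(Z_{1}\to Z_{2})\circ(Y_{1}\to Z_{1})=(Y_{2}\to Z_{2})\circ(Y_{1}\to Y_{2})$; since $Y_{1}\to Z_{1}$ lies, after rotation, in a distinguished triangle together with the connecting morphism $Z_{1}\to\Sigma X$, the exact sequence obtained by applying $\text{Hom}_{\mathcal{D}}(-,Z_{2})$ shows that any two solutions of this equation differ by a morphism factoring through $Z_{1}\to\Sigma X$, and compatibility with the connecting morphisms (part of the ambient $4\times 4$ diagram) removes that ambiguity; the second arrow is handled the same way with $\text{Hom}_{\mathcal{D}}(-,Y_{3})$. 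I expect this last reconciliation of the constructed arrows with the prescribed ones to be the only genuine obstacle, the rest being a formal invocation of the $3\times 3$ lemma. Equivalently, the statement follows at once from the octahedral axiom applied to the composite $X\to Y_{1}\to Y_{2}$, whose three associated cones are $Z_{1}$, $Z_{2}$ and $Y_{3}$ and which supplies the needed compatibilities directly; but since the surrounding text is phrased through the $3\times 3$ lemma I would keep the argument in those terms.
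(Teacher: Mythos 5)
Your first two paragraphs coincide with the paper's own proof: invoke the $3\times 3$ lemma to build the $4\times 4$ diagram, read off $Z'' \cong Y_3$ from the third column $0 \to Y_3 \to Z'' \to 0$, and transport the bottom row along that isomorphism.

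Your third paragraph, where you try to identify the $3\times 3$ lemma's constructed maps $Z_1 \to Z_2$ and $Z_2 \to Y_3$ with the ones already drawn in the hypothesis, has a genuine flaw: the ``compatibility with the connecting morphisms'' you invoke --- that the prescribed $Z_1 \to Z_2$ also intertwines the maps $Z_1 \to \Sigma X$ and $Z_2 \to \Sigma X$ --- is not among the corollary's hypotheses, since the displayed $3\times 3$ square contains no $\Sigma X$ term. That extra compatibility is precisely what would be needed to kill the ambiguity coming from $\mathrm{Hom}_{\mathcal{D}}(\Sigma X, Z_2)$, so it cannot simply be asserted. Indeed, with genuinely prescribed arrows the statement is false: in $D^{b}(k)$ take $X=Y_1=Y_2=k$ with $X\to Y_1$ and $Y_1\to Y_2$ both zero, so that $Z_1\cong Z_2\cong Y_3\cong k\oplus \Sigma k$; then $\phi=\mathrm{diag}(0,1)\colon Z_1\to Z_2$ and $\psi=\mathrm{id}\colon Z_2\to Y_3$ make every drawn square commute while $\psi\circ\phi\neq 0$, so they cannot be two consecutive maps in a distinguished triangle. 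The corollary is correct, and is used throughout the paper, only in the weaker sense that \emph{some} maps $Z_1\to Z_2\to Y_3$ compatible with the rest of the diagram complete the bottom row to a distinguished triangle --- which is exactly what your first two paragraphs prove. You were right to single out the reconciliation as the delicate point; just note that the paper sidesteps it rather than resolving it, and in the strong reading it has no resolution.
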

\begin{proof}
By using $3\times 3$ lemma, there is a object $U$ such that the following diagram is commutative except for its bottom right square, which commutes up to the sign -1, and all four rows and columns are distinguished triangles. 
$$\begin{tikzcd}
X \arrow[r, "id"] \arrow[d] & X \arrow[r] \arrow[d]     & 0 \arrow[d] \arrow[r]     & \Sigma(X) \arrow[d]     \\
Y_{1} \arrow[r] \arrow[d]   & Y_{2} \arrow[r] \arrow[d] & Y_{3} \arrow[d] \arrow[r] & \Sigma(Y_{1}) \arrow[d] \\
Z_{1} \arrow[r] \arrow[d]   & Z_{2} \arrow[r] \arrow[d] & U \arrow[r] \arrow[d]     & \Sigma(Z_{1}) \arrow[d] \\
\Sigma(X) \arrow[r]         & \Sigma(X) \arrow[r]       & 0 \arrow[r]               & \Sigma^{2}(X)          
\end{tikzcd}$$
Then $U \cong Y_{3}$ and we can replace $U$ with $Y_{3}$, and we finish the proof.
\end{proof}

%\begin{lemma}
%Let $D_{i}=(D_{i}^{\leqslant 0},D_{i}^{\geqslant 1})$ be a t-structure on $\mathcal{D}$. And we suppose that $X \rightarrow Y \rightarrow Z \rightarrow \Sigma(X)$ is a distinguished triangle in $\mathcal{D}$.
%\begin{itemize}
%\item[(1)] If $Z \in D_{i}^{\geqslant 0}$, then we have a exact sequence $0 \rightarrow H_{i}^{0}(X) \rightarrow H_{i}^{0}(Y) \rightarrow H_{i}^{0}(Z)$;
%\item[(2)] If $X \in D_{i}^{\leqslant 0}$, then we have a exact sequence $H_{i}^{0}(X) \rightarrow H_{i}^{0}(Y) \rightarrow H_{i}^{0}(Z) \rightarrow 0$.
%\end{itemize}
%\end{lemma}

\section{Correspondence between t-structures}
In this section, we develop a correspondence between some particular t-structures and some pairs of full subcategories.
\par Through this section, we let $D_{1}=(D_{1}^{\leqslant 0},D_{1}^{\geqslant 1})$ and $D_{2}=(D_{2}^{\leqslant 0},D_{2}^{\geqslant 1})$ be a pair of t-structures with $D_{1}^{\leqslant 0} \subseteq D_{2}^{\leqslant 0}$,which is equivalent to $D_{1}^{\geqslant 1} \supseteq D_{2}^{\geqslant 1}$.

\begin{definition}
Let $D_{1}$ and $D_{2}$ be as above. A \textbf{t-structure} $(U_{1}^{\leqslant 0}, U_{1}^{\geqslant 1})$ of  $D_{2}^{\geqslant 1} \bigcap D_{1}^{\leqslant 0}$ consists of two full subcategories, which are subject to the following conditions:
\begin{itemize}
\item[(T1')]$D_{1}^{\leqslant 0}\ast U_{1}^{\leqslant 0} \subseteq D_{1}^{\leqslant 1}\ast U_{1}^{\leqslant 1}$ and $U_{1}^{\geqslant 1} \ast D_{2}^{\geqslant 1}  \subseteq U_{1}^{\geqslant 0} \ast D_{2}^{\geqslant 0} $;
\item[(T2')] $\text{Hom}_{\mathcal{D}}(U_{1}^{\leqslant 0},U_{1}^{\geqslant 1})=0$;
\item[(T3')] for any object $X \in D_{2}^{\geqslant 1} \bigcap D_{1}^{\leqslant 0}$, there is a distinguished triangle 
$$Y \rightarrow X \rightarrow Z \rightarrow \Sigma(H_{0})$$
for some $Y \in U_{1}^{\leqslant 0}$ and $Z \in U_{1}^{\geqslant 1}$.
\end{itemize}
For each integer n, we also define $U_{1}^{\leqslant n}=\Sigma^{-n}(U_{1}^{\leqslant 0})$ and $U_{1}^{\leqslant n}=\Sigma^{-n+1}(U_{1}^{\geqslant 1})$.
\end{definition}

This definition is similar to the definition of t-structure on $\mathcal{D}$ . It also make a connection to torsion pair in heart of a t-structure. In order not to create ambiguity, for any t-structure $U_{1}$  and object $X$ of $D_{2}^{\geqslant 1} \bigcap D_{1}^{\leqslant 0}$, the canonical distinguished triangle would be denoted by

$$X_{U_{1}}^{\leqslant 0} \rightarrow X \rightarrow X_{U_{1}}^{\geqslant 1} \rightarrow \Sigma(X_{U_{1}}^{\leqslant 0})$$
with $X_{U_{1}}^{\leqslant 0} \in U_{1}^{\leqslant 0}$ and $X_{U_{1}}^{\geqslant 1} \in U_{1}^{\geqslant 1}$ and called the $U_{1}$-canonical distinguished triangle of $X$.

\begin{example}
Recall that a torsion pair $(\mathcal{T}, \mathcal{F})$ of an abelian category $\mathcal{A}$ consists of two full subcategories satisfying the following properties:
\begin{itemize}
\item[(A1)] $\text{Hom}_{\mathcal{A}}(\mathcal{T},\mathcal{F})=0$;
\item[(A2)] for any $M \in \mathcal{A}$, there exists a short exact sequence
$$0 \rightarrow T_{M} \rightarrow M \rightarrow F_{M} \rightarrow 0$$
with $T_{M} \in \mathcal{T}$ and $F_{M} \in \mathcal{F}$. 

\end{itemize}
A torsion pair is called \textbf{trivial} if $\mathcal{A}=\mathcal{T}$ or $\mathcal{A}=\mathcal{F}$.Let's consider two special t-structures: $(D_{1}^{\leqslant -1},D_{1}^{\geqslant 0})$ and $(D_{1}^{\leqslant 0},D_{1}^{\geqslant 1}).$ Then $D_{1}^{\geqslant 0} \bigcap D_{1}^{\leqslant 0}=H_{1}$ is the heart of $D_{1}$, and in fact, $(U_{1}^{\leqslant 0}, U_{1}^{\geqslant 1})$ is t-structure in $H_{1}$ if and only if $(U_{1}^{\leqslant 0}, U_{1}^{\geqslant 1})$ is a torsion pair in $H_{1}$.
\end{example}

Before going to the main result of this section, we have a technical lemma by $D_{1}^{\geqslant 1} \supseteq D_{2}^{\geqslant 1}$.

\begin{lemma}
Let $D_{1}$ and $D_{2}$ be as above. For any $X \in \mathcal{D}$, we have
\begin{itemize}
\item[(1)]$(X_{2}^{\leqslant 0})_{1}^{\geqslant 1}\cong (X_{1}^{\geqslant 1})_{2}^{\leqslant 0}$
\item[(2)]$(X_{1}^{\leqslant 0})_{2}^{\geqslant 1}\cong (X_{2}^{\geqslant 1})_{1}^{\leqslant 0}\cong 0$
\item[(3)]$(X_{1}^{\leqslant 0})_{2}^{\leqslant 0}\cong X_{1}^{\leqslant 0} \cong (X_{2}^{\leqslant 0})_{1}^{\leqslant 0}$
\item[(4)]$(X_{1}^{\geqslant 1})_{2}^{\geqslant 1}\cong X_{2}^{\geqslant 1} \cong (X_{2}^{\geqslant 1})_{1}^{\geqslant 1}$
\end{itemize}
\end{lemma}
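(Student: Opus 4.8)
The plan is to reduce everything to two ingredients: the trivial action of the truncation functors on objects that already lie in an aisle or co-aisle, and a single octahedron (equivalently, one application of Corollary \ref{cor3x3}) together with uniqueness of canonical triangles.

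First I would dispatch the easy isomorphisms. Since $X_1^{\leqslant 0}\in D_1^{\leqslant 0}\subseteq D_2^{\leqslant 0}$, applying the $D_2$-truncations to $X_1^{\leqslant 0}$ forces $(X_1^{\leqslant 0})_2^{\leqslant 0}\cong X_1^{\leqslant 0}$ and $(X_1^{\leqslant 0})_2^{\geqslant 1}\cong 0$; dually, since $X_2^{\geqslant 1}\in D_2^{\geqslant 1}\subseteq D_1^{\geqslant 1}$, we get $(X_2^{\geqslant 1})_1^{\geqslant 1}\cong X_2^{\geqslant 1}$ and $(X_2^{\geqslant 1})_1^{\leqslant 0}\cong 0$. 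This settles (2), the left half of (3) and the right half of (4).

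For the remaining isomorphisms I would build one comparison object. Because $X_1^{\leqslant 0}\in D_2^{\leqslant 0}$, the adjunction defining $\tau_2^{\leqslant 0}$ shows the canonical morphism $X_1^{\leqslant 0}\to X$ factors uniquely as $X_1^{\leqslant 0}\xrightarrow{\phi}X_2^{\leqslant 0}\to X$. Applying the octahedral axiom to this composition, and using $\mathrm{cone}(X_1^{\leqslant 0}\to X)\cong X_1^{\geqslant 1}$ and $\mathrm{cone}(X_2^{\leqslant 0}\to X)\cong X_2^{\geqslant 1}$, I obtain $C:=\mathrm{cone}(\phi)$ sitting in two distinguished triangles
\[ X_1^{\leqslant 0}\xrightarrow{\phi}X_2^{\leqslant 0}\to C\to \Sigma X_1^{\leqslant 0},\qquad C\to X_1^{\geqslant 1}\to X_2^{\geqslant 1}\to \Sigma C. \]
Rotating the first triangle, $C$ is an extension of $\Sigma X_1^{\leqslant 0}$ by $X_2^{\leqslant 0}$, and both lie in $D_2^{\leqslant 0}$ (the first because $\Sigma D_1^{\leqslant 0}\subseteq D_1^{\leqslant 0}\subseteq D_2^{\leqslant 0}$ by (T1)); since $D_2^{\leqslant 0}$ is closed under extensions, $C\in D_2^{\leqslant 0}$. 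Rotating the second triangle, $C$ is an extension of $X_1^{\geqslant 1}$ by $\Sigma^{-1}X_2^{\geqslant 1}$, and both lie in $D_1^{\geqslant 1}$ (the second because $\Sigma^{-1}D_2^{\geqslant 1}\subseteq D_2^{\geqslant 1}\subseteq D_1^{\geqslant 1}$ by (T1)); since $D_1^{\geqslant 1}$ is closed under extensions, $C\in D_1^{\geqslant 1}$. Hence $C\in D_1^{\geqslant 1}\cap D_2^{\leqslant 0}$.

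Finally I would read off the claimed isomorphisms by uniqueness of canonical triangles. The first triangle above has left term in $D_1^{\leqslant 0}$ and right term $C\in D_1^{\geqslant 1}$, so it is the $D_1$-canonical triangle of $X_2^{\leqslant 0}$, giving $(X_2^{\leqslant 0})_1^{\leqslant 0}\cong X_1^{\leqslant 0}$ (completing (3)) and $(X_2^{\leqslant 0})_1^{\geqslant 1}\cong C$. The second triangle has left term $C\in D_2^{\leqslant 0}$ and right term in $D_2^{\geqslant 1}$, so it is the $D_2$-canonical triangle of $X_1^{\geqslant 1}$, giving $(X_1^{\geqslant 1})_2^{\leqslant 0}\cong C$ and $(X_1^{\geqslant 1})_2^{\geqslant 1}\cong X_2^{\geqslant 1}$ (completing (4)). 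Combining, $(X_2^{\leqslant 0})_1^{\geqslant 1}\cong C\cong (X_1^{\geqslant 1})_2^{\leqslant 0}$, which is (1). I expect the main obstacle to be the bookkeeping in the octahedral step — producing the two triangles with exactly the stated terms and verifying the extension-closure properties that force $C$ into both $D_2^{\leqslant 0}$ and $D_1^{\geqslant 1}$; this is the only place the hypothesis $D_1^{\leqslant 0}\subseteq D_2^{\leqslant 0}$ (equivalently $D_2^{\geqslant 1}\subseteq D_1^{\geqslant 1}$) is genuinely used, and it enters twice.
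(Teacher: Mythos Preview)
Your proof is correct and follows essentially the same route as the paper: both build the same octahedron on the composite $X_1^{\leqslant 0}\to X_2^{\leqslant 0}\to X$ and then invoke uniqueness of canonical triangles. The only cosmetic difference is the starting point: the paper begins with the $D_1$-canonical triangle of $X_2^{\leqslant 0}$ (so the object $(X_2^{\leqslant 0})_1^{\geqslant 1}$ appears from the outset and the remaining vertex $Y$ is later identified with $X_1^{\geqslant 1}$), whereas you begin with the factorisation through $\tau_2^{\leqslant 0}$, name the cone $C$, and identify both truncations with $C$ afterwards; your presentation makes the symmetry between (3), (4) and (1) a bit more transparent, but the content is the same.
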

\begin{proof}
Let $X\in \mathcal{D}$. Since $\text{Hom}_{\mathcal{D}}(D_{1}^{\leqslant 0},D_{2}^{\geqslant 1})\subseteq \text{Hom}_{\mathcal{D}}(D_{2}^{\leqslant 0},D_{2}^{\geqslant 1})=0$, (2) is obvious.
\par Since $D_{1}^{\leqslant 0} \subseteq D_{2}^{\leqslant 0}$, $(X_{1}^{\leqslant 0})_{2}^{\leqslant 0}\cong X_{1}^{\leqslant 0}$ is clear. On the other hand, using the $D_{2}$-canonical distinguished triangle of $X$ and the octahedral axiom, we have the following commutative diagram:

$$\begin{tikzcd}
                                                                 & \Sigma^{-1}(X_{2}^{\geqslant 1}) \arrow[d] \arrow[r, "id"] & \Sigma^{-1}(X_{2}^{\geqslant 1}) \arrow[d]                &                                                                \\
(X_{2}^{\leqslant 0})_{1}^{\leqslant 0} \arrow[r] \arrow[d, "id"] & X_{2}^{\leqslant 0} \arrow[d] \arrow[r]                      & (X_{2}^{\leqslant 0})_{1}^{\geqslant 1} \arrow[d] \arrow[r] & \Sigma((X_{2}^{\leqslant 0})_{1}^{\leqslant 0}) \arrow[d, "id"] \\
(X_{2}^{\leqslant 0})_{1}^{\leqslant 0} \arrow[r]                 & X \arrow[r] \arrow[d]                                      & Y \arrow[d] \arrow[r]                                     & \Sigma((X_{2}^{\leqslant 0})_{1}^{\leqslant 0})                 \\
                                                                 & X_{2}^{\geqslant 1} \arrow[r, "id"]                        & X_{2}^{\geqslant 1}                                       &                                                               
\end{tikzcd}$$
which implies $Y \in D_{1}^{\geqslant 1}$, since $(X_{2}^{\leqslant 0})_{1}^{\geqslant 1} \in D_{1}^{\geqslant 1}$ and $X_{2}^{\leqslant 1} \in D_{2}^{\geqslant 1} \subseteq D_{1}^{\geqslant 1}$. Hence we have $(X_{2}^{\leqslant 0})_{1}^{\leqslant 0} \cong X_{1}^{\leqslant 0}$ and $Y \cong X_{1}^{\geqslant 1}$. Note $(X_{2}^{\leqslant 0})_{1}^{\geqslant 1} \in D_{2}^{\leqslant 0}$, since $X_{2}^{\leqslant 0}\in D_{2}^{\leqslant 0}$ and $\Sigma(X_{1}^{\leqslant 0}) \in D_{1}^{\leqslant -1} \subseteq D_{2}^{\leqslant 0}$. Hence we have  $(X_{2}^{\leqslant 0})_{1}^{\geqslant 1} \cong (X_{1}^{\geqslant 1})_{2}^{\leqslant 0}$. Therefore we finish the proof of (1) and (3), and the proof of (4) is similar.
 
\end{proof}

The main result of this section is the following.

\begin{theorem}\label{main theorem}
We denote $\mathcal{H}=D_{1}^{\geqslant 1} \bigcap D_{2}^{\leqslant 0} $. There is a correspondence:

$\begin{array}{*{3}{rll}}
\text{\{ t-structure \,} D_{i}=(D_{i}^{\leqslant 0},D_{i}^{\geqslant 1}) \text{\, in \,} D |\text{\,} D_{1}^{\leqslant 0} \subseteq D_{i}^{\leqslant 0} \subseteq D_{2}^{\leqslant 0}\} & \leftrightarrow & \{\text{t-structure in \,} \mathcal{H} \} \\
(D_{i}^{\leqslant 0},D_{i}^{\geqslant 1}) & \mapsto & (\mathcal{H} \bigcap D_{i}^{\leqslant 0},D_{i}^{\geqslant 1}\bigcap \mathcal{H})\\
(D_{1}^{\leqslant 0}\ast U_{i}^{\leqslant 0}, U_{i}^{\geqslant 1}\ast D_{2}^{\geqslant 1}) & \mapsfrom & (U_{i}^{\leqslant 0}, U_{i}^{\geqslant 1})

\end{array}$

\end{theorem}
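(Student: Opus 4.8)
The plan is to verify that the two maps described in the statement are mutually inverse bijections, which breaks into three tasks: (i) well-definedness of the forward map $(D_i^{\leqslant 0},D_i^{\geqslant 1})\mapsto(\mathcal{H}\cap D_i^{\leqslant 0},D_i^{\geqslant 1}\cap\mathcal{H})$, i.e. checking that this pair really is a t-structure on $\mathcal{H}$ in the sense of the Definition above (conditions (T1')--(T3')); (ii) well-definedness of the backward map $(U_i^{\leqslant 0},U_i^{\geqslant 1})\mapsto(D_1^{\leqslant 0}\ast U_i^{\leqslant 0},U_i^{\geqslant 1}\ast D_2^{\geqslant 1})$, i.e. checking the resulting pair is an honest t-structure on $\mathcal{D}$ satisfying $D_1^{\leqslant 0}\subseteq D_i^{\leqslant 0}\subseteq D_2^{\leqslant 0}$; and (iii) that the two composites are the identity.

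For (i), condition (T2') is immediate since $\operatorname{Hom}_{\mathcal{D}}(D_i^{\leqslant 0},D_i^{\geqslant 1})=0$ restricts. For (T3'), given $X\in\mathcal{H}=D_1^{\geqslant 1}\cap D_2^{\leqslant 0}$, I would take its $D_i$-canonical triangle $X_i^{\leqslant 0}\to X\to X_i^{\geqslant 1}\to\Sigma(X_i^{\leqslant 0})$ and argue that both outer terms again lie in $\mathcal{H}$: $X_i^{\leqslant 0}\in D_i^{\leqslant 0}\subseteq D_2^{\leqslant 0}$, and $X_i^{\leqslant 0}\in D_1^{\geqslant 1}$ follows because $X\in D_1^{\geqslant 1}$, $\Sigma^{-1}X_i^{\geqslant 1}\in\Sigma^{-1}D_i^{\geqslant 1}\subseteq D_1^{\geqslant 0}$... here I need the previous Lemma(s) (parts (3),(4) about interaction of truncations) to pin down that truncating an object of $\mathcal{H}$ stays inside $\mathcal{H}$; this is the routine but slightly fiddly bookkeeping. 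Condition (T1') is a reformulation of $\Sigma D_i^{\leqslant 0}\subseteq D_i^{\leqslant 0}$ relative to the ambient pair $(D_1,D_2)$, obtained by $\ast$-ing with $D_1^{\leqslant 0}$ and $D_2^{\geqslant 1}$ and using associativity of $\ast$.

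For (ii), I would first record the identities $D_1^{\leqslant 0}\ast(D_1^{\geqslant 1}\cap D_2^{\leqslant 0})=D_2^{\leqslant 0}$ and $(D_1^{\geqslant 1}\cap D_2^{\leqslant 0})\ast D_2^{\geqslant 1}=D_1^{\geqslant 1}$, which follow from (T3) applied twice together with the previous Lemma; these say the ``forward'' construction interpolates correctly. Then (T2) for $(D_1^{\leqslant 0}\ast U_i^{\leqslant 0},\ U_i^{\geqslant 1}\ast D_2^{\geqslant 1})$ reduces, via the bilinearity of $\operatorname{Hom}$ over triangles, to the four vanishings $\operatorname{Hom}(D_1^{\leqslant 0},U_i^{\geqslant 1})$, $\operatorname{Hom}(D_1^{\leqslant 0},D_2^{\geqslant 1})$, $\operatorname{Hom}(U_i^{\leqslant 0},U_i^{\geqslant 1})$, $\operatorname{Hom}(U_i^{\leqslant 0},D_2^{\geqslant 1})$, each of which holds (the first and last because $U_i^{\geqslant 1},U_i^{\leqslant 0}\subseteq\mathcal{H}$ and $\mathcal{H}\subseteq D_1^{\geqslant 1}$, $\mathcal{H}\subseteq D_2^{\leqslant 0}$, with $\operatorname{Hom}(D_1^{\leqslant 0},D_1^{\geqslant 1})=0=\operatorname{Hom}(D_2^{\leqslant 0},D_2^{\geqslant 1})$; the middle two by (T2) for $D_1,D_2$; the third by (T2')). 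For (T3), given arbitrary $X\in\mathcal{D}$ I would iterate: take the $D_1$-canonical triangle of $X$, then apply (T3') to the piece $(X_2^{\leqslant 0})_1^{\geqslant 1}\in\mathcal{H}$, then reassemble using the octahedral axiom / Corollary~\ref{cor3x3} to produce a triangle with left term in $D_1^{\leqslant 0}\ast U_i^{\leqslant 0}$ and right term in $U_i^{\geqslant 1}\ast D_2^{\geqslant 1}$. Condition (T1) for the new pair is again $\ast$-manipulation using $\Sigma D_1^{\leqslant 0}\subseteq D_1^{\leqslant 0}$ and the corresponding $U_i$-version (T1'). The containments $D_1^{\leqslant 0}\subseteq D_1^{\leqslant 0}\ast U_i^{\leqslant 0}\subseteq D_1^{\leqslant 0}\ast\mathcal{H}=D_2^{\leqslant 0}$ are then clear (the first because $0\in U_i^{\leqslant 0}$).

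Finally (iii): starting from $(U_i^{\leqslant 0},U_i^{\geqslant 1})$, going forward produces $((D_1^{\leqslant 0}\ast U_i^{\leqslant 0})\cap\mathcal{H},\ (U_i^{\geqslant 1}\ast D_2^{\geqslant 1})\cap\mathcal{H})$, and I must show this is $(U_i^{\leqslant 0},U_i^{\geqslant 1})$ again; the inclusion $U_i^{\leqslant 0}\subseteq(D_1^{\leqslant 0}\ast U_i^{\leqslant 0})\cap\mathcal{H}$ is trivial and the reverse uses the uniqueness of the decomposition (Remark~\ref{useful remark}) to see that an object of $\mathcal{H}$ lying in $D_1^{\leqslant 0}\ast U_i^{\leqslant 0}$ has its $D_1^{\leqslant 0}$-part zero, hence lies in $U_i^{\leqslant 0}$. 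Conversely, starting from a t-structure $D_i$ with $D_1^{\leqslant 0}\subseteq D_i^{\leqslant 0}\subseteq D_2^{\leqslant 0}$, I must check $D_1^{\leqslant 0}\ast(\mathcal{H}\cap D_i^{\leqslant 0})=D_i^{\leqslant 0}$ and dually: ``$\subseteq$'' is clear, and ``$\supseteq$'' follows by taking $X\in D_i^{\leqslant 0}$, forming its $D_1$-canonical triangle $X_1^{\leqslant 0}\to X\to X_1^{\geqslant 1}$, and noting $X_1^{\geqslant 1}\in D_1^{\geqslant 1}$ while $X_1^{\geqslant 1}\in D_i^{\leqslant 0}$ (as a cone of a map in $D_i^{\leqslant 0}$) and $X_1^{\geqslant 1}\in D_2^{\leqslant 0}$, so $X_1^{\geqslant 1}\in\mathcal{H}\cap D_i^{\leqslant 0}$. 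The main obstacle I anticipate is the (T3) verification in step (ii) --- correctly stacking the two canonical decompositions and invoking the $3\times3$/octahedral machinery of Corollary~\ref{cor3x3} to exhibit the single triangle with the prescribed outer terms, together with keeping straight the many nested truncations via the earlier Lemma; the $\operatorname{Hom}$-vanishing and the roundtrip identities are comparatively mechanical.
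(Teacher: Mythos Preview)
Your proposal is correct and follows essentially the same architecture as the paper's proof: the same three-part breakdown (forward map well-defined, backward map well-defined, mutual inverses), the same four Hom-vanishings for (T2), the same iterated-truncation-plus-$3\times 3$ argument for (T3), and the same $D_1$-canonical-triangle argument for the roundtrip $D_i \mapsto D_1^{\leqslant 0}\ast(\mathcal{H}\cap D_i^{\leqslant 0})$. Your uniqueness-of-decomposition argument for $(D_1^{\leqslant 0}\ast U_i^{\leqslant 0})\cap\mathcal{H}=U_i^{\leqslant 0}$ is in fact cleaner than the paper's Hom-orthogonality computation at that spot, but this is a local simplification rather than a different route.
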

\begin{proof}
Firstly, we check the maps are well-defined.
\par Let $D_{i}=(D_{i}^{\leqslant 0},D_{i}^{\geqslant 1})$ be a t-structure  of $\mathcal{D}$ with $D_{1}^{\leqslant 0} \subseteq D_{i}^{\leqslant 0} \subseteq D_{2}^{\leqslant 0}$, then we must check $(\mathcal{H} \bigcap D_{i}^{\leqslant 0},D_{i}^{\geqslant 1}\bigcap \mathcal{H})$ satisfies (T1'), (T2') and (T3').Observe that 
$$\text{Hom}_{\mathcal{D}}(\mathcal{H} \bigcap D_{i}^{\leqslant 0},D_{i}^{\geqslant 1}\bigcap \mathcal{H})\subseteq \text{Hom}_{\mathcal{D}}(D_{1}^{\leqslant 0},D_{1}^{\geqslant 1})=0,$$
thus (T2') is satisfied. To verify (T1'), we have the following commutative diagram with all rows and columns are distinguished triangles for any $X \in \mathcal{D}$
$$\begin{tikzcd}
\Sigma^{-1}(X_{1}^{\geqslant1})_{1}^{\leqslant 1} \arrow[d] \arrow[r] & 0 \arrow[d] \arrow[r] & (X_{1}^{\geqslant1})_{1}^{\leqslant 1} \arrow[d] \arrow[r] & (X_{1}^{\geqslant1})_{1}^{\leqslant 1} \arrow[d] \\
X_{1}^{\leqslant 0} \arrow[d] \arrow[r]                               & X \arrow[d] \arrow[r] & X_{1}^{\geqslant 1} \arrow[d] \arrow[r]                    & \Sigma(X_{1}^{\leqslant 0}) \arrow[d]            \\
X_{1}^{\leqslant 1} \arrow[r] \arrow[d]                               & X \arrow[r] \arrow[d] & X_{1}^{\geqslant 2} \arrow[r] \arrow[d]                    & \Sigma(X_{1}^{\leqslant 1}) \arrow[d]            \\
(X_{1}^{\leqslant 1})^{\geqslant 1} \arrow[r]                         & 0 \arrow[r]           & \Sigma(X_{1}^{\geqslant1})_{1}^{\leqslant 1} \arrow[r]     & \Sigma(X_{1}^{\geqslant1})_{1}^{\leqslant 1}    
\end{tikzcd}.$$
We find $\mathcal{H}\bigcap D_{i}^{\leqslant 0}=D_{1}^{\geqslant 1} \bigcap D_{i}^{\leqslant 0} $ and $\Sigma^{-1}(D_{i}^{\leqslant 0} \bigcap D_{1}^{\geqslant 1})=D_{i}^{\leqslant 2} \bigcap D_{1}^{\geqslant 1}$, so for any $X \in D_{1}^{\leqslant 0}\ast D_{i}^{\leqslant 0} \bigcap D_{1}^{\geqslant 1}$,we have $X_{1}^{\geqslant 1} \in D_{i}^{\leqslant 0} \bigcap D_{1}^{\geqslant 1} \subseteq D_{i}^{\leqslant 0}$, which implies that $X_{1}^{\geqslant 2} \in D_{i}^{\leqslant 0} \subset D_{i}^{\leqslant 1}$ and $X \in D_{1}^{\leqslant 1} \ast (D_{i}^{\leqslant 1} \bigcap D_{1}^{\geqslant 2})$. In the same way we can check $(D_{i}^{\geqslant 1} \bigcap D_{2}^{\leqslant 0})\ast D_{2}^{\geqslant 1} \subset (D_{i}^{\geqslant 0} \bigcap D_{2}^{\leqslant -1})\ast D_{2}^{\geqslant -1}$, therefore (T1') is satisfied.  Notice that we have the canonical distinguished triangle $X_{i}^{\leqslant 0} \rightarrow X \rightarrow X_{i}^{\geqslant 1} \rightarrow \Sigma(X_{i}^{\leqslant 0})$, hence for any $X \in \mathcal{H}$ we have $X_{i}^{\leqslant 0} \in D_{1}^{\geqslant 1} \bigcap D_{i}^{\leqslant 0}=\mathcal{H}\bigcap D_{i}^{\leqslant 0}$ and $X_{i}^{\geqslant 1} \in D_{i}^{\geqslant 1} \bigcap D_{2}^{\leqslant 0}=D_{i}^{\geqslant 1}\bigcap \mathcal{H}$.

\par Conversely, we need to check that $(D_{1}^{\leqslant 0}\ast U_{i}^{\leqslant 0}, U_{i}^{\geqslant 1}\ast D_{2}^{\geqslant 1})$ is a t-structure of $\mathcal{D}$ for any t-structure $(U_{i}^{\leqslant 0},U_{i}^{\geqslant 1})$ of $\mathcal{H}$.
\par By the definition of $(U_{i}^{\leqslant 0},U_{i}^{\geqslant 1})$, (T1) is satisfied. Notice 
$$\text{Hom}_{\mathcal{D}}(D_{1}^{\leqslant 0},U_{i}^{\geqslant 1})=\text{Hom}_{\mathcal{D}}(D_{1}^{\leqslant 0},D_{2}^{\geqslant 1})=0,$$
$$\text{Hom}_{\mathcal{D}}(U_{i}^{\leqslant 0},U_{i}^{\geqslant 1})=\text{Hom}_{\mathcal{D}}(U_{i}^{\leqslant 0},D_{2}^{\geqslant 1})=0,$$
thus $\text{Hom}_{\mathcal{D}}(D_{1}^{\leqslant 0} \ast U_{i}^{\leqslant 0},U_{i}^{\geqslant 1}\ast D_{2}^{\geqslant 1})=0$ and (T2') is satisfied. To show (T3'), we let $X \in \mathcal{D}$. We begin by using $D_{2}$-canonical distinguished triangle of $X$, $D_{1}$-canonical distinguished triangle of $X_{2}^{\leqslant 0}$ and corollary\ref{cor3x3}  to get the following commutative diagram and all rows and columns are distinguished triangles
$$\begin{tikzcd}
X_{1}^{\leqslant 0} \arrow[r, "id"] \arrow[d]     & X_{1}^{\leqslant 0} \arrow[d] \arrow[r] & 0 \arrow[d]                         \\
X_{2}^{\leqslant 0} \arrow[d] \arrow[r]           & X \arrow[d, "f"] \arrow[r]              & X_{2}^{\geqslant 1} \arrow[d, "id"] \\
(X_{2}^{\leqslant 0})_{1}^{\geqslant 1} \arrow[r] & P \arrow[r]                             & X_{2}^{\geqslant 1}                
\end{tikzcd}.$$
Note that we have a distinguish triangle 
$$X_{1}^{\leqslant 0} \rightarrow X \rightarrow P \rightarrow \Sigma(X_{1}^{\leqslant 0}).$$
Now we use the $U_{i}$-canonical distinguished triangle of $(X_{2}^{\leqslant 0})_{1}^{\geqslant 1}$ and corollary\ref{cor3x3} to form the following commutative diagram and all rows and columns are distinguished triangles
$$\begin{tikzcd}
((X_{2}^{\leqslant 0})_{1}^{\geqslant 1})_{U_{i}}^{\leqslant 0} \arrow[r, "id"] \arrow[d] & ((X_{2}^{\leqslant 0})_{1}^{\geqslant 1})_{U_{i}}^{\leqslant 0} \arrow[r] \arrow[d] & 0 \arrow[d]                   \\
(X_{2}^{\leqslant 0})_{1}^{\geqslant 1} \arrow[d] \arrow[r]                               & P \arrow[r] \arrow[d,"g"]                                                               & X_{2}^{\geqslant 1} \arrow[d, "id"]  \\
((X_{2}^{\leqslant 0})_{1}^{\geqslant 1})_{U_{i}}^{\geqslant 1} \arrow[r]                 & Q \arrow[r]                                                                         & X_{2}^{\geqslant 1}          
\end{tikzcd}$$

Then the lower distinguished triangle shows that $Q \in H_{2}^{\geqslant 1} \ast D_{2}^{\geqslant 1}$ and have a distinguished triangle
$$((X_{2}^{\leqslant 0})_{1}^{\geqslant 1})_{U_{i}}^{\leqslant 0} \rightarrow P \rightarrow Q \rightarrow\Sigma(((X_{2}^{\leqslant 0})_{1}^{\geqslant 1})_{U_{i}}^{\leqslant 0}).$$
Finally, we have the following commutative diagram and all rows and columns are distinguished triangles,
$$\begin{tikzcd}
X_{1}^{\leqslant 0} \arrow[r] \arrow[d, "id"] & R \arrow[r] \arrow[d]                   & ((X_{2}^{\leqslant 0})_{1}^{\geqslant 1})_{U_{i}}^{\leqslant 0}  \arrow[d] \\
X_{1}^{\leqslant 0} \arrow[r] \arrow[d]       & X \arrow[d, "g \circ f"] \arrow[r, "f"] & P \arrow[d, "g"]                                                           \\
0 \arrow[r]                                   & Q \arrow[r, "id"]                       & Q                                                                         
\end{tikzcd}$$
We observe that the upper distinguished triangle shows that $R \in D_{1}^{\leqslant 0} \ast U_{i}^{\leqslant 0}$ and we have the distinguished triangle
$$R \rightarrow X \rightarrow Q \rightarrow \Sigma(R)$$
which shows that (T3') is satisfied.

\par We show that the mappings are mutually inverse. 
\par Let $D_{i}$ be a t-structure of $\mathcal{D}$ such that $D_{1}^{\leqslant 0} \subseteq D_{i}^{\leqslant 0} \subseteq D_{2}^{\leqslant 0}$.Observe that $D_{1}^{\leqslant 0}$, $D_{i}^{\leqslant 0} \bigcap \mathcal{H}=D_{i}^{\leqslant 0} \bigcap D_{1}^{\geqslant 1} \subseteq D_{i}^{\leqslant 0}$, which means $D_{1}^{\leqslant 0} \ast (D_{i}^{\leqslant 0} \bigcap \mathcal{H}) \subseteq D_{i}^{\leqslant 0}$. On the other hand, for the any $X \in D_{i}^{\leqslant 0}$, we have the $D_{1}$-canonical distinguished triangle
$$X_{1}^{\leqslant 0} \rightarrow X \rightarrow X_{1}^{\geqslant 1} \rightarrow \Sigma(X_{1}^{\leqslant 0}),$$
which implies that $X_{1}^{\geqslant 1} \in D_{i}^{\leqslant 0} \bigcap D_{1}^{\geqslant 1}=D_{i}^{\leqslant 0} \bigcap \mathcal{H}$, hence $D_{1}^{\leqslant 0} \ast (D_{i}^{\leqslant 0} \bigcap \mathcal{H}) \supseteq D_{i}^{\leqslant 0}$. The fact that $D_{i}^{\geqslant 1}=(D_{i}^{\geqslant 1} \bigcap \mathcal{H})\ast D_{2}^{\geqslant 1}$ follows by a dual argument.

\par Let $U_{i}$ be a t-structure on $\mathcal{H}$. We claim that $U_{i}^{\leqslant 0}=(D_{1}^{\leqslant 0} \ast U_{i}^{\leqslant 0}) \bigcap \mathcal{H}$. Suppose $X \in (D_{1}^{\leqslant 0} \ast U_{i}^{\leqslant 0}) \bigcap \mathcal{H}$, then we have a distinguished triangle
$$Y \rightarrow X \rightarrow Z \rightarrow \Sigma(Y)$$
with $Y \in D_{1}^{\leqslant 0}$ and $Z \in U_{i}^{\leqslant 0}$. For any $W \in U_{i}^{\leqslant 0}$,  we apply the functor $\text{Hom}_{\mathcal{D}}(W,-)$, we have the exact sequence
$$\text{Hom}_{\mathcal{D}}(W,Y) \rightarrow \text{Hom}_{\mathcal{D}}(W,X) \rightarrow \text{Hom}_{\mathcal{D}}(W,Z) \rightarrow \text{Hom}_{\mathcal{D}}(W,\Sigma(Y)).$$
Notice $\text{Hom}_{\mathcal{D}}(W,Y) \in \text{Hom}_{\mathcal{D}}(D_{1}^{\leqslant 0}, \mathcal{H})=0$ and $\text{Hom}_{\mathcal{D}}(W,Z) \in \text{Hom}_{\mathcal{D}}(U_{i}^{\leqslant 0}, U_{i}^{\geqslant 1})=0$ for any $W \in U_{i}^{\leqslant 0}$, therefore we have $X \in U_{i}^{\leqslant 0}$. Conversely, let $M \in U_{i}^{\leqslant 0}$, we have the the $D_{1}$-canonical distinguished triangle of $M$ 

$$M_{1}^{\leqslant 0} \rightarrow M \rightarrow M_{1}^{\geqslant 1} \rightarrow \Sigma(M_{1}^{\leqslant 0}).$$
By applying $\text{Hom}_{\mathcal{D}}(-,N)$ to the $D_{1}$-canonical distinguished triangle of $M$ for any $N \in U_{i}^{\geqslant 1}$, we have a exact sequence
$$\text{Hom}_{\mathcal{D}}(\Sigma(M_{1}^{\leqslant 0}),N) \rightarrow \text{Hom}_{\mathcal{D}}(M_{1}^{\geqslant 1},N) \rightarrow \text{Hom}_{\mathcal{D}}(M,N)  \rightarrow \text{Hom}_{\mathcal{D}}(M_{1}^{\leqslant 0},N),$$
which means that $M_{1}^{\geqslant 1} \in \mathcal{H}$. Observe $\text{Hom}_{\mathcal{D}}(\Sigma(M_{1}^{\leqslant 0}),N) \in \text{Hom}_{\mathcal{D}}(D_{1}^{\leqslant 0},D_{1}^{\geqslant 1})=0$ and $\text{Hom}_{\mathcal{D}}(M,N) \in \text{Hom}_{\mathcal{D}}(U_{i}^{\leqslant 0},U_{i}^{\geqslant 1})=0$ for any $N \in U_{i}^{\geqslant 1}$, so we have $M \in (D_{1}^{\leqslant 0} \ast U_{i}^{\leqslant 0}) \bigcap \mathcal{H}$. The fact that $U_{i}^{\geqslant 1}=(U_{i}^{\geqslant 1} \ast D_{2}^{\geqslant 1}) \bigcap \mathcal{H}$ holds follows by a dual argument.

\end{proof}

\begin{corollary}[\cite{happel1996tilting} Proposition 2.1, \cite{beligiannis2007homological} Theorem 3.1]
Let $D_{1}$ be a t-structure on a triangulated category $\mathcal{D}$. Then there is a canonical isomorphism between the torsion pair in the heart $H_{1}$ and the t-structure $D_{2}$ satisfied $D_{1}^{\leqslant 0} \subseteq D_{2}^{\leqslant 0} \subseteq D_{1}^{\leqslant 1}$. 
\end{corollary}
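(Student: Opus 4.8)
The plan is to obtain the statement by combining Theorem~\ref{main theorem} with the Example above and one shift of indices. First I would apply Theorem~\ref{main theorem} to the pair of t-structures $(D_{1}^{\leqslant -1},D_{1}^{\geqslant 0})$, in the role of the ``inner'' t-structure, and $D_{1}=(D_{1}^{\leqslant 0},D_{1}^{\geqslant 1})$, in the role of the ``outer'' one. The only hypothesis to check is $D_{1}^{\leqslant -1}\subseteq D_{1}^{\leqslant 0}$, which is immediate from (T1). For this pair the intermediate category of the theorem is $\mathcal{H}=D_{1}^{\geqslant 0}\cap D_{1}^{\leqslant 0}=H_{1}$, the heart of $D_{1}$, so Theorem~\ref{main theorem} becomes a bijection between the t-structures $D_{i}$ of $\mathcal{D}$ with $D_{1}^{\leqslant -1}\subseteq D_{i}^{\leqslant 0}\subseteq D_{1}^{\leqslant 0}$ and the t-structures of $\mathcal{H}=H_{1}$ in the sense of the Definition. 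Since the pair of t-structures here is exactly the one appearing in the Example, that Example identifies those ``t-structures of $H_{1}$'' with the torsion pairs of the abelian category $H_{1}$ (the underlying pair of subcategories being literally the same), so at this point we already have a natural bijection between torsion pairs of $H_{1}$ and t-structures $D_{i}$ with $D_{1}^{\leqslant -1}\subseteq D_{i}^{\leqslant 0}\subseteq D_{1}^{\leqslant 0}$.

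The remaining step is to move this bijection along the autoequivalence $\Sigma^{-1}$ so that its range matches the one in the statement. The operation $D_{i}\mapsto\Sigma^{-1}(D_{i})=(D_{i}^{\leqslant 1},D_{i}^{\geqslant 2})$ is a bijection of the collection of all t-structures of $\mathcal{D}$, with inverse $\Sigma$, and applying $\Sigma^{-1}$ to the chain $D_{1}^{\leqslant -1}\subseteq D_{i}^{\leqslant 0}\subseteq D_{1}^{\leqslant 0}$ turns it into $D_{1}^{\leqslant 0}\subseteq(\Sigma^{-1}D_{i})^{\leqslant 0}\subseteq D_{1}^{\leqslant 1}$, and conversely. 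Composing the three bijections --- Theorem~\ref{main theorem}, the Example, and this shift --- yields the asserted correspondence between torsion pairs of $H_{1}$ and t-structures $D_{2}$ with $D_{1}^{\leqslant 0}\subseteq D_{2}^{\leqslant 0}\subseteq D_{1}^{\leqslant 1}$. Tracing the three maps, a torsion pair $(\mathcal{T},\mathcal{F})$ of $H_{1}$ is sent to the t-structure $D_{2}$ with aisle $D_{2}^{\leqslant 0}=D_{1}^{\leqslant 0}\ast\Sigma^{-1}(\mathcal{T})$ (so $D_{1}$ corresponds to $(0,H_{1})$ and $\Sigma^{-1}(D_{1})$ to $(H_{1},0)$), and conversely $D_{2}$ is sent to the pair $\big(\Sigma(D_{2}^{\leqslant 0})\cap H_{1},\,\Sigma(D_{2}^{\geqslant 1})\cap H_{1}\big)$; this is precisely the tilting correspondence of \cite{happel1996tilting} and \cite{beligiannis2007homological}. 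Each of the three maps being a fixed natural operation --- intersecting with fixed subcategories of $\mathcal{D}$, the identification of the Example, and applying the autoequivalence $\Sigma^{-1}$ --- the composite is canonical in the required sense, not merely a bijection of sets.

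I do not anticipate a genuine obstacle here: all of the real content has been absorbed into Theorem~\ref{main theorem} and the Example, so the proof reduces to selecting the right pair of t-structures and pushing one shift through the statement. The two points deserving a line of care are (i) the index bookkeeping, so that the range emerges as $D_{1}^{\leqslant 0}\subseteq D_{2}^{\leqslant 0}\subseteq D_{1}^{\leqslant 1}$ and not an off-by-one variant, and (ii) making precise the word ``canonical'', i.e.\ observing that the composite of the three natural maps is itself natural and agrees with the cited statements --- both of which are immediate once the maps are written out as above.
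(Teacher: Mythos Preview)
Your argument is correct and is essentially the paper's own proof. Both proofs apply Theorem~\ref{main theorem} to two consecutive shifts of $D_{1}$ and then absorb a single $\Sigma$-shift; the only difference is where that shift is placed. The paper takes the pair $(D_{1}^{\leqslant 0},D_{1}^{\geqslant 1})$ and $(D_{1}^{\leqslant 1},D_{1}^{\geqslant 2})$, so the t-structure range $D_{1}^{\leqslant 0}\subseteq D_{2}^{\leqslant 0}\subseteq D_{1}^{\leqslant 1}$ comes out directly but the intermediate is $\Sigma^{-1}(H_{1})$, which is then identified with $H_{1}$ via $\Sigma$. You take the pair $(D_{1}^{\leqslant -1},D_{1}^{\geqslant 0})$ and $(D_{1}^{\leqslant 0},D_{1}^{\geqslant 1})$ from the Example, so the intermediate is $H_{1}$ on the nose, and you then shift the resulting t-structures by $\Sigma^{-1}$. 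These are the same argument up to where one chooses to conjugate by $\Sigma$.
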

\begin{proof}
Let the theorem\ref{main theorem} for $(D_{1}^{\leqslant 0},D_{1}^{\geqslant 1})$ and $(D_{1}^{\leqslant 1},D_{1}^{\geqslant 2} )$, then the t-structure $D_{2}$ satisfied $D_{1}^{\leqslant 0} \subseteq D_{2}^{\leqslant 0} \subseteq D_{1}^{\leqslant 1}$ and torsion pair in $\Sigma(H_{1})$ correspond one to one, where $\Sigma(H_{1})\cong H_{1}$, so we finish the proof.
\end{proof}

\section{The distance between t-structures}\label{section distance}
In this section, we discuss the distance between two t-structure and determine their distance, and then 
\begin{definition}{(\cite{chen2022extensions}, Definition 4.2)}\label{t-structure distance}
For two t-structures $D_{1}$ and $D_{2}$, we define their \textbf{distance} $d(D_{1},D_{2})$ to be the smallest natural number $d$ such that $D_{1}^{\leqslant m} \subseteq D_{2}^{\leqslant 0} \subseteq D_{1}^{\leqslant m+d}$ for some integer $m$. If such $d$ does not exist, we set $d(D_{1},D_{2})=+ \infty.$
\end{definition}

Applying the translation functor, we have the following basic fact:

\begin{lemma}\label{sy}
Let $D_{1}=(D_{1}^{\leqslant 0},D_{1}^{\geqslant 1})$ and $D_{2}=(D_{2}^{\leqslant 0},D_{2}^{\geqslant 1})$ be two t-structures on $\mathcal{D}$. Assume that $m$ and $n$ are integers satisfying $m \leqslant n$. Then the following statements are equivalent:
\begin{itemize}
\item[(1)] $D_{1}^{\leqslant m} \subseteq D_{2}^{\leqslant 0} \subseteq D_{1}^{\leqslant n}$;
\item[(2)] $D_{2}^{\leqslant -n} \subseteq D_{1}^{\leqslant 0} \subseteq D_{2}^{\leqslant -m}$;
\item[(3)] $D_{1}^{\geqslant n} \supseteq D_{2}^{\geqslant 0} \supseteq D_{1}^{\geqslant m}$;
\item[(4)] $D_{2}^{\geqslant -m} \supseteq D_{1}^{\geqslant 0} \supseteq D_{2}^{\geqslant -n}$
\item[(5)] $D_{2}^{\leqslant m} \subseteq D_{1}^{\leqslant 0}$ and $D_{2}^{\geqslant n} \subseteq D_{1}^{\geqslant 0} $;
\item[(6)] $D_{1}^{\leqslant -n} \subseteq D_{2}^{\leqslant 0}$ and $D_{1}^{\geqslant -m} \subseteq D_{2}^{\geqslant 0} $.
\end{itemize}
\end{lemma}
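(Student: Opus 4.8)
The plan is to reduce the whole statement to two elementary principles, after which (1)--(6) become mere reformulations of one another. \emph{First principle (shifting):} since $\Sigma$ is an autoequivalence of $\mathcal{D}$, a containment $\mathcal{X}\subseteq\mathcal{Y}$ of full subcategories holds if and only if $\Sigma^{k}\mathcal{X}\subseteq\Sigma^{k}\mathcal{Y}$ for one (equivalently every) integer $k$, and by definition $\Sigma^{k}D_{i}^{\leqslant a}=D_{i}^{\leqslant a-k}$ and $\Sigma^{k}D_{i}^{\geqslant a}=D_{i}^{\geqslant a-k}$; so any containment between aisles or coaisles of $D_{1}$ and $D_{2}$ may be translated by an arbitrary amount. \emph{Second principle (orthogonality):} for a t-structure one has $D_{i}^{\leqslant 0}=\{X\mid\mathrm{Hom}_{\mathcal{D}}(X,D_{i}^{\geqslant 1})=0\}$ and $D_{i}^{\geqslant 1}=\{Y\mid\mathrm{Hom}_{\mathcal{D}}(D_{i}^{\leqslant 0},Y)=0\}$, a standard consequence of (T2), (T3) and the closure of the two aisles under direct summands (\cite{beilinson2018faisceaux}). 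From this, for all integers $a,b$,
$$D_{1}^{\leqslant a}\subseteq D_{2}^{\leqslant b}\ \Longleftrightarrow\ D_{2}^{\geqslant b+1}\subseteq D_{1}^{\geqslant a+1};$$
indeed "$\Rightarrow$" holds because for $X\in D_{1}^{\leqslant a}\subseteq D_{2}^{\leqslant b}$ and $Y\in D_{2}^{\geqslant b+1}$ one gets $\mathrm{Hom}_{\mathcal{D}}(X,Y)=0$, so every such $Y$ is right-orthogonal to $D_{1}^{\leqslant a}$ and hence lies in $D_{1}^{\geqslant a+1}$, while "$\Leftarrow$" is dual.

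Granting these, the argument is pure bookkeeping. Write $(\alpha)$ for the containment $D_{1}^{\leqslant m}\subseteq D_{2}^{\leqslant 0}$ and $(\beta)$ for $D_{2}^{\leqslant 0}\subseteq D_{1}^{\leqslant n}$, so that (1) reads $(\alpha)\wedge(\beta)$. Each of the two containments occurring in any of (2)--(6) is obtained from $(\alpha)$ or from $(\beta)$ by applying the first principle with a suitable shift, or the second principle, or a composite of the two; since all of these operations are reversible, each of (2)--(6) is equivalent to $(\alpha)\wedge(\beta)$, i.e.\ to (1). Concretely: (2) comes from $(\alpha)$, $(\beta)$ by the shifts $\Sigma^{m}$, $\Sigma^{n}$; (3) and (4) come from first passing $(\alpha)$, $(\beta)$ to their orthogonal ($\geqslant$) forms via the second principle and then shifting; (5) and (6) are handled the same way, rewriting one of $(\alpha)$, $(\beta)$ by orthogonality and the other by a shift, the hypothesis $m\leqslant n$ only serving to keep the resulting pair of one-sided containments equivalent to the original two-sided one. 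Chaining the equivalences yields $(1)\Leftrightarrow(2)\Leftrightarrow\cdots\Leftrightarrow(6)$.

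I expect no serious obstacle: the mathematical content is entirely contained in the two principles above. The only points that require a moment's care are purely notational --- keeping track of which index plays the role of $m$ and which of $n$ when passing from a two-sided sandwich of $D_{2}^{\leqslant 0}$ to one of $D_{2}^{\geqslant 0}$ or of $D_{1}^{\leqslant 0}$ --- together with the verification of the orthogonality principle, where one genuinely uses that $D_{i}^{\leqslant 0}$ (and $D_{i}^{\geqslant 1}$) is closed under direct summands, so that an object annihilated by every object of $D_{i}^{\geqslant 1}$ really does lie in $D_{i}^{\leqslant 0}$.
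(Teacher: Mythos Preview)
Your proposal is correct and matches the paper's approach: the paper does not give a proof at all, merely prefacing the lemma with ``Applying the translation functor, we have the following basic fact.'' Your two principles---shifting by $\Sigma^{k}$ and the orthogonality characterisation $D_{i}^{\leqslant 0}={}^{\perp}(D_{i}^{\geqslant 1})$, $D_{i}^{\geqslant 1}=(D_{i}^{\leqslant 0})^{\perp}$---are exactly what is needed, and your bookkeeping is sound; in fact you are more explicit than the paper, since passing from (1) to any of (3)--(6) genuinely requires the orthogonality principle and not just translation. One minor remark: the hypothesis $m\leqslant n$ is actually irrelevant for the logical equivalences (each half of (1) is independently equivalent to a half of each of (2)--(6)); it is there only so that the sandwiches are not automatically empty.
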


\begin{proposition}
We have $d(D_{1},D_{2})=d(D_{2},D_{1})$, and $d(D_{1},D_{2})=0$ if and only if $D_{2}=\Sigma^{n}(D_{1})$ for some integer $n$.
\end{proposition}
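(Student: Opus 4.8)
The plan is to prove the two assertions separately: the symmetry $d(D_1,D_2)=d(D_2,D_1)$ is a formal consequence of Lemma \ref{sy}, while the characterization of distance zero rests on the (standard) fact that a t-structure is recovered from its aisle.

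For the symmetry, I would show that a natural number $d$ is admissible in the definition of $d(D_1,D_2)$ if and only if it is admissible in the definition of $d(D_2,D_1)$; since each distance is the infimum of its set of admissible values (with the convention $\inf\varnothing=+\infty$), the equality follows at once. By Definition \ref{t-structure distance}, $d$ is admissible for $d(D_1,D_2)$ iff $D_1^{\leqslant m}\subseteq D_2^{\leqslant 0}\subseteq D_1^{\leqslant m+d}$ for some integer $m$. Applying Lemma \ref{sy} with the pair $(m,m+d)$ (note $m\leqslant m+d$), this is equivalent to $D_2^{\leqslant -(m+d)}\subseteq D_1^{\leqslant 0}\subseteq D_2^{\leqslant -m}$ for some integer $m$; setting $m'=-(m+d)$ (which ranges over all integers as $m$ does), this reads $D_2^{\leqslant m'}\subseteq D_1^{\leqslant 0}\subseteq D_2^{\leqslant m'+d}$, i.e. exactly admissibility of $d$ for $d(D_2,D_1)$.

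For the distance-zero statement, note first that $d(D_1,D_2)=0$ means there is an integer $m$ with $D_1^{\leqslant m}\subseteq D_2^{\leqslant 0}\subseteq D_1^{\leqslant m}$, equivalently $D_2^{\leqslant 0}=D_1^{\leqslant m}=\Sigma^{-m}(D_1^{\leqslant 0})$. The task is to upgrade this equality of aisles to $D_2=\Sigma^{-m}(D_1)$, i.e. $D_2^{\geqslant 1}=D_1^{\geqslant m+1}$. For this I would establish the lemma that for any t-structure $D=(D^{\leqslant 0},D^{\geqslant 1})$ one has $D^{\geqslant 1}=\{Y\in\mathcal{D}\mid \mathrm{Hom}_{\mathcal{D}}(D^{\leqslant 0},Y)=0\}$. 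The inclusion $\subseteq$ is (T2). For $\supseteq$, given $Y$ with $\mathrm{Hom}_{\mathcal{D}}(D^{\leqslant 0},Y)=0$, take the $D$-canonical triangle $Y_0\to Y\to Y_1\to\Sigma(Y_0)$ with $Y_0\in D^{\leqslant 0}$, $Y_1\in D^{\geqslant 1}$; applying $\mathrm{Hom}_{\mathcal{D}}(Y_0,-)$ to the rotated triangle $\Sigma^{-1}(Y_1)\to Y_0\to Y\to Y_1$ and using $\mathrm{Hom}_{\mathcal{D}}(Y_0,\Sigma^{-1}Y_1)\cong\mathrm{Hom}_{\mathcal{D}}(\Sigma Y_0,Y_1)=0$ (since $\Sigma Y_0\in D^{\leqslant 0}$ by (T1) and $Y_1\in D^{\geqslant 1}$) together with $\mathrm{Hom}_{\mathcal{D}}(Y_0,Y)=0$ forces $\mathrm{id}_{Y_0}=0$, hence $Y_0\cong 0$ and $Y\cong Y_1\in D^{\geqslant 1}$. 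Applying this lemma to $D_2$ and to the shifted t-structure $\Sigma^{-m}(D_1)=(D_1^{\leqslant m},D_1^{\geqslant m+1})$ yields $D_2^{\geqslant 1}=\{Y\mid\mathrm{Hom}_{\mathcal{D}}(D_2^{\leqslant 0},Y)=0\}=\{Y\mid\mathrm{Hom}_{\mathcal{D}}(D_1^{\leqslant m},Y)=0\}=D_1^{\geqslant m+1}$, so $D_2=\Sigma^{-m}(D_1)$. Conversely, if $D_2=\Sigma^{n}(D_1)$ then $D_2^{\leqslant 0}=\Sigma^{n}(D_1^{\leqslant 0})=D_1^{\leqslant -n}$, and $m=-n$ witnesses $D_1^{\leqslant m}\subseteq D_2^{\leqslant 0}\subseteq D_1^{\leqslant m}$, so $d(D_1,D_2)=0$.

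The only genuine content is the lemma that the aisle determines the coaisle; the rest is bookkeeping with Lemma \ref{sy} and the shift conventions. I do not expect a real obstacle, but one should be careful that the argument for $\supseteq$ above avoids any idempotent-completeness assumption on $\mathcal{D}$ — and it does, since it concludes $Y_0\cong 0$ directly rather than realizing $Y$ as a retract of an object already known to lie in $D^{\geqslant 1}$.
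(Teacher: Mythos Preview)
Your proposal is correct and follows essentially the same route as the paper. The symmetry argument is the paper's argument (via Lemma~\ref{sy}) rephrased in terms of the set of admissible values, which has the minor advantage of handling the $+\infty$ case uniformly; for the distance-zero characterization the paper simply declares it ``easy to check by definition,'' and what you have written is precisely the standard verification (aisle determines coaisle) that the paper omits.
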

\begin{proof}
By definition, it's easy to check the second statement, hence we only show $d(D_{1},D_{2})=d(D_{2},D_{1})$. 
\par Let $d=d(D_{1},D_{2})$ and $d'=d(D_{2},D_{1})$. By definition, we have $D_{1}^{\leqslant m} \subseteq D_{2}^{\leqslant 0} \subseteq D_{1}^{\leqslant m+d}$ for some integer $m$, which implies $D_{2}^{\leqslant -(m+d)} \subseteq D_{1}^{\leqslant 0} \subseteq D_{1}^{\leqslant -m}$ and $d \geqslant d'$. Conversely, we have $d \leqslant d'$ and hence $d=d'$.
\end{proof}

Let $D_{1}$ and $D_{2}$ be two t-structures on $\mathcal{D}$ with $d(D_{1},D_{2})=d < \infty$. A natural question is whether the $m$ in the definition of distance is unique when $d$ is fixed. The following proposition give a positive answer.

\begin{proposition}\label{unique}
Let $D_{1}=(D_{1}^{\leqslant 0}, D_{1}^{\geqslant 1})$ and $D_{2}=(D_{2}^{\leqslant 0}, D_{2}^{\geqslant 1})$ be two t-structures in $\mathcal{D}$. Let $d$ be the smallest natural number such that $D_{1}^{\leqslant m} \subseteq D_{2}^{\leqslant 0} \subseteq D_{1}^{\leqslant m+d}$ for some integer $m$, then the $m$ is unique.
\end{proposition}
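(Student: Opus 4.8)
The plan is to argue by contradiction. Suppose that two integers $m$ and $m'$ both satisfy the defining inclusions for the (fixed, minimal) $d$, i.e. $D_1^{\leqslant m}\subseteq D_2^{\leqslant 0}\subseteq D_1^{\leqslant m+d}$ and $D_1^{\leqslant m'}\subseteq D_2^{\leqslant 0}\subseteq D_1^{\leqslant m'+d}$, and suppose $m\neq m'$; after interchanging them if necessary we may assume $m<m'$, hence $m+1\leqslant m'$.

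First I would splice together the inclusions coming from the two witnesses. Since $m'$ is a witness we have $D_1^{\leqslant m'}\subseteq D_2^{\leqslant 0}$, and since $m+1\leqslant m'$ the Remark on non-stable t-structures (which says $a\leqslant b$ iff $D_1^{\leqslant a}\subseteq D_1^{\leqslant b}$, and in particular gives $D_1^{\leqslant m+1}\subseteq D_1^{\leqslant m'}$) yields $D_1^{\leqslant m+1}\subseteq D_2^{\leqslant 0}$. On the other hand, since $m$ is a witness, $D_2^{\leqslant 0}\subseteq D_1^{\leqslant m+d}$. Concatenating gives $D_1^{\leqslant m+1}\subseteq D_2^{\leqslant 0}\subseteq D_1^{\leqslant m+d}$.

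Next I would split into two cases. If $d\geqslant 1$, write $m+d=(m+1)+(d-1)$; the chain above then exhibits the natural number $d-1$ and the integer $m+1$ with $D_1^{\leqslant m+1}\subseteq D_2^{\leqslant 0}\subseteq D_1^{\leqslant (m+1)+(d-1)}$, contradicting the minimality of $d$. If $d=0$, the chain reads $D_1^{\leqslant m+1}\subseteq D_2^{\leqslant 0}\subseteq D_1^{\leqslant m}$, so $D_1^{\leqslant m}=D_1^{\leqslant m+1}$, which contradicts non-stability (by the same Remark, $m<m+1$ forces the inclusion $D_1^{\leqslant m}\subseteq D_1^{\leqslant m+1}$ to be proper). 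In either case we reach a contradiction, so $m=m'$.

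The argument involves no real computation; the only point that needs care is the degenerate case $d=0$, and, underlying it, the implicit standing assumption that the t-structures considered are non-stable. That assumption is genuinely necessary: for a stable $D_1$ one has $D_1^{\leqslant m}=D_1^{\leqslant 0}$ for every $m$, so taking $D_2=D_1$ gives $d=0$ with \emph{every} integer $m$ a witness, and uniqueness fails. Thus the essential task is simply to route the hypotheses correctly, invoking the Remark both to compare truncations of $D_1$ at different degrees and to rule out the stable situation.
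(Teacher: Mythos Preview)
Your proof is correct and follows the same overall strategy as the paper's: assume two witnesses $m<m'$ and derive a contradiction with the minimality of $d$. The paper organises the cases differently, splitting on whether $m+d\leqslant m'$ or $m+d>m'$; in the first case it shows $D_2^{\leqslant 0}=D_1^{\leqslant m'}$, forces $d=0$, and then concludes $m=m'$, while in the second case it combines the two chains to obtain $D_1^{\leqslant m'}\subseteq D_2^{\leqslant 0}\subseteq D_1^{\leqslant m+d}$ and invokes minimality directly. Your split on $d\geqslant 1$ versus $d=0$ is a bit more economical: you pass immediately to $D_1^{\leqslant m+1}\subseteq D_2^{\leqslant 0}\subseteq D_1^{\leqslant m+d}$ and contradict minimality in one stroke, isolating the degenerate $d=0$ case where non-stability is genuinely needed.

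You are also right to flag the non-stability hypothesis. The paper introduces this standing assumption only \emph{after} the proposition, yet its own proof tacitly uses it (in the step ``$d=0$, hence $m=n$'', which amounts to deducing $m=n$ from $D_1^{\leqslant m}=D_1^{\leqslant n}$). Your explicit treatment of this point is an improvement in clarity.
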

\begin{proof}
Let $n$ be another natural number satisfies the condition, then we must show $m=n$. Without loss of generality we suppose $m\leqslant n$.
\par If $m+d\leqslant n$, then we have $D_{1}^{\leqslant m} \subseteq D_{2}^{\leqslant 0} \subseteq D_{1}^{\leqslant m+d} \subseteq D_{1}^{\leqslant n} \subseteq D_{2}^{\leqslant 0} \subseteq D_{1}^{\leqslant n+d}$, which means $D_{2}^{\leqslant 0}=D_{1}^{\leqslant n}$ and $d=0$. Hence we have $m=n$.
\par If $m+d > n$, then we have $D_{1}^{\leqslant m} \subseteq D_{2}^{\leqslant 0} \subseteq D_{1}^{\leqslant m+d}$ and $D_{1}^{\leqslant n} \subseteq D_{2}^{\leqslant 0} \subseteq D_{1}^{\leqslant n+d}$, which implies that $D_{1}^{\leqslant n} \subseteq D_{2}^{\leqslant 0} \subseteq D_{1}^{\leqslant m+d}$ and we must have $m=n$ by the choice of $d$.
\end{proof}

Therefore, if $d(D_{1},D_{2})=d < \infty$, then the $d$ and $m$ are unique, and we denote the $m$ with $m_{d}$. Hence, one main problem is that how to determine $d$ or even the $m_{d}$? We will give a positive answer to this question. Before going deeper, let's fix some notations to help us to state the conclusion and some lemmas that would be used later. 
\par In the rest of this section, we always let $D_{1}=(D_{1}^{\leqslant 0}, D_{1}^{\geqslant 1})$ and $D_{2}=(D_{2}^{\leqslant 0}, D_{2}^{\geqslant 1})$ be two t-structures on $\mathcal{D}$ with $d(D_{1},D_{2})< \infty$. We assume $D_{1}^{\leqslant n_{1}} \subseteq D_{2}^{\leqslant 0} \subseteq D_{1}^{\leqslant n_{2}}$ for some integers number $n_{1}$ and $n_{2}$ with $n_{1}\leqslant n_{2}$. If either $D_{1}$ or $D_{2}$ is stable or in the case of $n_{1}=n_{2}$, then we must have $d(D_{1},D_{2})=0$, and this case is trivial, therefore we also suppose neither $D_{1}$ nor $D_{2}$ is stable and $n_{1}+1 \leqslant n_{2}$. Then we can find the relationship among $m_{d}$, $d$, $n_{1}$ and $n_{2}$.

\begin{lemma}
For $D_{1}=(D_{1}^{\leqslant 0}, D_{1}^{\geqslant 1})$ and $D_{2}=(D_{2}^{\leqslant 0}, D_{2}^{\geqslant 1})$, if  $D_{1}^{\leqslant n_{1}} \subseteq D_{2}^{\leqslant 0} \subseteq D_{1}^{\leqslant n_{2}}$, then we have $n_{1} \leqslant m_{d} \leqslant m_{d}+d \leqslant n_{2}$.
\end{lemma}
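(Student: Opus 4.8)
The plan is to leverage two facts already available: the uniqueness statement for the pair $(m_{d},d)$ recorded just before this lemma (via Proposition \ref{unique}), and the standing assumption that neither $D_{1}$ nor $D_{2}$ is stable, so that for the t-structure $D_{1}$ one has $D_{1}^{\leqslant a}\subseteq D_{1}^{\leqslant b}$ if and only if $a\leqslant b$. Recall that $(m_{d},d)$ is characterised by $D_{1}^{\leqslant m_{d}}\subseteq D_{2}^{\leqslant 0}\subseteq D_{1}^{\leqslant m_{d}+d}$ together with $d$ being \emph{minimal} among natural numbers admitting such an integer shift. The middle inequality $m_{d}\leqslant m_{d}+d$ is immediate since $d$ is a natural number, so the content is the two outer inequalities $n_{1}\leqslant m_{d}$ and $m_{d}+d\leqslant n_{2}$, each of which I would prove by contradiction against the minimality of $d$.

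For $n_{1}\leqslant m_{d}$: suppose instead $n_{1}\geqslant m_{d}+1$. Then $D_{1}^{\leqslant m_{d}+1}\subseteq D_{1}^{\leqslant n_{1}}\subseteq D_{2}^{\leqslant 0}$, while the defining property of $m_{d}$ still gives $D_{2}^{\leqslant 0}\subseteq D_{1}^{\leqslant m_{d}+d}$. If $d\geqslant 1$, the pair $(m_{d}+1,\,d-1)$ witnesses $D_{1}^{\leqslant m_{d}+1}\subseteq D_{2}^{\leqslant 0}\subseteq D_{1}^{\leqslant (m_{d}+1)+(d-1)}$, contradicting that $d=d(D_{1},D_{2})$ is minimal. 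If $d=0$, then $D_{2}^{\leqslant 0}=D_{1}^{\leqslant m_{d}}$, so $D_{1}^{\leqslant m_{d}+1}\subseteq D_{1}^{\leqslant m_{d}}$ forces $m_{d}+1\leqslant m_{d}$ because $D_{1}$ is not stable, again a contradiction. Hence $n_{1}\leqslant m_{d}$. The inequality $m_{d}+d\leqslant n_{2}$ is entirely symmetric: assuming $n_{2}\leqslant m_{d}+d-1$ and combining $D_{1}^{\leqslant m_{d}}\subseteq D_{2}^{\leqslant 0}\subseteq D_{1}^{\leqslant n_{2}}\subseteq D_{1}^{\leqslant m_{d}+d-1}$, the pair $(m_{d},\,d-1)$ contradicts minimality of $d$ when $d\geqslant 1$, while the case $d=0$ once more contradicts non-stability of $D_{1}$.

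I do not anticipate a genuine obstacle here; the argument is a short extremality/minimality argument. The only points demanding care are the bookkeeping in the degenerate case $d=0$, and making sure the uniqueness of $m_{d}$ (for the fixed minimal $d$) is correctly invoked so that the symbol $m_{d}$ is unambiguous and the non-stability hypothesis of the section is explicitly used where needed.
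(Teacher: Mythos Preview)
Your proof is correct and follows essentially the same strategy as the paper's: both argue by contradiction against the minimality of $d$, combining the hypothesis $D_{1}^{\leqslant n_{1}}\subseteq D_{2}^{\leqslant 0}\subseteq D_{1}^{\leqslant n_{2}}$ with the defining inclusions $D_{1}^{\leqslant m_{d}}\subseteq D_{2}^{\leqslant 0}\subseteq D_{1}^{\leqslant m_{d}+d}$ to produce a shorter window. The only organisational difference is that the paper first separately establishes the overlap conditions $m_{d}+d\geqslant n_{1}$ and $m_{d}\leqslant n_{2}$ before invoking minimality, whereas you bypass that step and instead treat the degenerate case $d=0$ directly via the non-stability assumption; your route is slightly more streamlined but uses the same ingredients.
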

\begin{proof}
First, we claim that $m_{d}+d \geqslant n_{1}$. If not, then we have 
$$D_{1}^{\leqslant m_{d}} \subseteq D_{2}^{\leqslant 0} \subseteq D_{1}^{\leqslant m_{d}+d} \subseteq D_{1}^{\leqslant n_{1}} \subseteq D_{2}^{\leqslant 0} \subseteq D_{1}^{\leqslant n_{2}}.$$
It proves that $D_{2}^{\leqslant 0}=D_{1}^{\leqslant m_{d}+d}=D_{1}^{\leqslant n_{1}}$ and $n_{1}=m_{d}+d$, which is a contradiction. By the same way we can check that $m_{d} \leqslant n_{2}$.
\par And we also claim that $n_{1} \leqslant m_{d}$. If not, we have $D_{1}^{\leqslant n_{1}} \subseteq D_{2}^{\leqslant 0} \subseteq D_{1}^{m_{d}+d}$, and then $m_{d}+d-n_{1} <d$, which is a contradiction to definition of $d$. By the same way we can check that $m_{d}+d \leqslant n_{2}$.
\end{proof}

Hence, in our suppose, we just need to analyse when $D_{2}^{\leqslant 0}$ can be embedded in a 'subintermediate', i.e. $D_{1}^{k_{1}}\subseteq D_{2}^{\leqslant 0} D_{1}^{\leqslant k_{2}}$ for some $n_{1}\leqslant k_{1} \leqslant k_{2} \leqslant n_{2}$. In fact, we have the following notion which is known to experts; compare (\cite{marks2021lifting}, Subsection 2.2).
\begin{definition}
Let $D_{1}=(D_{1}^{\leqslant 0}, D_{1}^{\geqslant 1})$ be a t-structure on $\mathcal{D}$ and $n_{1}$, $n_{2}$ be integers with $n_{1} \leqslant n_{2}$. We define $D_{1}^{[n_{1},n_{2}]}= D_{1}^{\geqslant n_{1}} \bigcap D_{1}^{\leqslant n_{2}}$, called the \textbf{intermediate} with respect to $D_{1}$ from $n_{1}$ to $n_{2}$. For any integers $a, b$ satisfying $n_{1} \leqslant a \leqslant b \leqslant n_{2}$, $D_{1}^{[a,b]}$ is called a \textbf{subintermediate} of $D_{1}^{[n_{1},n_{2}]}$. 
\end{definition}

In fact, for every intermediate with respect to $D_{1}$ from $n_{1}$ to $n_{2}$, we have the following lemma.

\begin{lemma}\label{key lemma}
Let $D_{1}=(D_{1}^{\leqslant 0}, D_{1}^{\geqslant 1})$ be a t-structure on $\mathcal{D}$ and $n_{1}, n_{2}$ be integers with $n_{1} \leqslant n_{2}$. Then we have $D_{1}^{\leqslant n_{2}}=D_{1}^{\leqslant n_{1}} \ast \Sigma^{-(n_{1}+1)}(H_{1})\ast \cdots \ast \Sigma^{-n_{2}}(H_{1})$, and $D_{1}^{\geqslant n_{1}}=\Sigma^{-n_{1}}(H_{1})\ast \cdots \ast \Sigma^{-(n_{2}-1)}(H_{1})\ast D_{1}^{\geqslant n_{2}}$. In particular, $D_{1}^{[n_{1},n_{2}]}= \Sigma^{-n_{1}}(H_{1})\ast \cdots \ast  \Sigma^{-n_{2}}(H_{1})$.
\end{lemma}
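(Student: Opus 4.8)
The plan is to prove the first identity $D_{1}^{\leqslant n_{2}}=D_{1}^{\leqslant n_{1}} \ast \Sigma^{-(n_{1}+1)}(H_{1})\ast \cdots \ast \Sigma^{-n_{2}}(H_{1})$ by induction on $n_{2}-n_{1}$, reduce the second identity to the first by duality (or by applying $\Sigma^{-1}$-shifts and the symmetry in Lemma \ref{sy}), and then obtain the ``in particular'' statement by intersecting. For the base case $n_{2}-n_{1}=0$ both sides equal $D_{1}^{\leqslant n_{1}}$, and the case $n_{2}-n_{1}=1$ is the content of the standard fact $D_{1}^{\leqslant n_{1}+1}=D_{1}^{\leqslant n_{1}}\ast \Sigma^{-(n_{1}+1)}(H_{1})$: given $X\in D_{1}^{\leqslant n_{1}+1}$, the $\Sigma^{-n_{1}}(D_{1})$-canonical triangle $X_{1}^{\leqslant n_{1}}\to X\to X_{1}^{\geqslant n_{1}+1}\to \Sigma(X_{1}^{\leqslant n_{1}})$ has third term lying in $D_{1}^{\geqslant n_{1}+1}\cap D_{1}^{\leqslant n_{1}+1}=\Sigma^{-(n_{1}+1)}(H_{1})$; conversely any object of $D_{1}^{\leqslant n_{1}}\ast \Sigma^{-(n_{1}+1)}(H_{1})$ sits in $D_{1}^{\leqslant n_{1}+1}\ast D_{1}^{\leqslant n_{1}+1}=D_{1}^{\leqslant n_{1}+1}$ since $D_{1}^{\leqslant n_{1}+1}$ is closed under extensions.

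For the inductive step I would use associativity of $\ast$ (\cite{deligne1983faisceaux}, Lemma 1.3.10): write
$$D_{1}^{\leqslant n_{1}} \ast \Sigma^{-(n_{1}+1)}(H_{1})\ast \cdots \ast \Sigma^{-n_{2}}(H_{1}) = \big(D_{1}^{\leqslant n_{1}} \ast \Sigma^{-(n_{1}+1)}(H_{1})\ast \cdots \ast \Sigma^{-(n_{2}-1)}(H_{1})\big)\ast \Sigma^{-n_{2}}(H_{1}),$$
apply the induction hypothesis to the bracketed factor to rewrite it as $D_{1}^{\leqslant n_{2}-1}$, and then invoke the base-case-style identity $D_{1}^{\leqslant n_{2}}=D_{1}^{\leqslant n_{2}-1}\ast \Sigma^{-n_{2}}(H_{1})$. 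For the second identity, either dualize the whole argument using the $\tau^{\geqslant}$-truncations (the triangle $X_{1}^{\leqslant n_{2}-1}\to X\to X_{1}^{\geqslant n_{2}}\to\Sigma(X_{1}^{\leqslant n_{2}-1})$ peels off a copy of $\Sigma^{-(n_{2}-1)}(H_{1})$ from the left of $D_{1}^{\geqslant n_{1}}$), or observe it is literally the opposite-category statement of the first. Finally, intersect: $D_{1}^{[n_{1},n_{2}]}=D_{1}^{\geqslant n_{1}}\cap D_{1}^{\leqslant n_{2}}$, and one checks $\Sigma^{-n_{1}}(H_{1})\ast\cdots\ast\Sigma^{-n_{2}}(H_{1})$ is contained in both factored expressions (each $\Sigma^{-k}(H_{1})$ lies in $D_{1}^{\geqslant n_{1}}\cap D_{1}^{\leqslant n_{2}}$ for $n_{1}\leqslant k\leqslant n_{2}$, and both $D_{1}^{\geqslant n_{1}}$ and $D_{1}^{\leqslant n_{2}}$ are extension-closed), while conversely an object $X$ of the intersection has, via the truncation $X_{1}^{\leqslant n_{1}}\to X\to X_{1}^{\geqslant n_{1}+1}$, vanishing left-hand piece $X_{1}^{\leqslant n_{1}}=0$ (since $X\in D_{1}^{\geqslant n_{1}}$ forces $\mathrm{Hom}(D_{1}^{\leqslant n_{1}},X)$ to behave so that the connecting truncation triangle degenerates), reducing to the subintermediate one step shorter.

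The main obstacle I anticipate is getting the bookkeeping of the iterated $\ast$ exactly right while using associativity — in particular, making sure that when I split off the last factor $\Sigma^{-n_{2}}(H_{1})$ the remaining product is genuinely $D_{1}^{\leqslant n_{2}-1}$ and not something only contained in it, which is where I need the ``conversely'' direction of each one-step identity (extension-closedness of $D_{1}^{\leqslant k}$) rather than just one inclusion. The ``in particular'' clause is the subtlest point: showing that the intersection of the two factorizations collapses exactly to $\Sigma^{-n_{1}}(H_{1})\ast\cdots\ast\Sigma^{-n_{2}}(H_{1})$ requires arguing that an object simultaneously of $D_{1}^{\geqslant n_{1}}$ and $D_{1}^{\leqslant n_{2}}$ has all its $D_{1}$-cohomologies concentrated in degrees $[n_{1},n_{2}]$ and hence decomposes as the claimed iterated extension — this is essentially the standard ``Postnikov tower'' of an object in an intermediate, and I would present it via repeated application of the one-step truncation triangles together with the already-established first two identities.
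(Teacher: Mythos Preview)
Your inductive proof of the first two identities is correct and amounts to a repackaging of the paper's argument: the paper unfolds the iterated one-step truncations $X_{1}^{\leqslant k}\to X_{1}^{\leqslant k+1}\to(\text{piece in }\Sigma^{-(k+1)}(H_{1}))$ explicitly for $k=n_{2}-1,n_{2}-2,\ldots,n_{1}$, while you phrase the same mechanism as induction on $n_{2}-n_{1}$ together with associativity of $\ast$. The converse inclusion via extension-closedness of $D_{1}^{\leqslant n_{2}}$ is identical in both.

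There is, however, a genuine slip in your treatment of the ``in particular'' clause. For $X\in D_{1}^{[n_{1},n_{2}]}=D_{1}^{\geqslant n_{1}}\cap D_{1}^{\leqslant n_{2}}$, the truncation triangle $X_{1}^{\leqslant n_{1}}\to X\to X_{1}^{\geqslant n_{1}+1}$ does \emph{not} have vanishing left term: the hypothesis $X\in D_{1}^{\geqslant n_{1}}$ forces only $X_{1}^{\leqslant n_{1}-1}=0$, not $X_{1}^{\leqslant n_{1}}=0$ (any nonzero object of $\Sigma^{-n_{1}}(H_{1})$ already witnesses this). The repair is straightforward: either observe that the rotated triangle places $X_{1}^{\leqslant n_{1}}$ as an extension of objects in $D_{1}^{\geqslant n_{1}}$, so $X_{1}^{\leqslant n_{1}}\in D_{1}^{\leqslant n_{1}}\cap D_{1}^{\geqslant n_{1}}=\Sigma^{-n_{1}}(H_{1})$ and the induction on the length of the interval proceeds; or follow the paper, which instead appeals to Remark~\ref{useful remark} (uniqueness of the $\ast$-decomposition) to conclude directly that for $X\in D_{1}^{\geqslant n_{1}}$ the $D_{1}^{\leqslant n_{1}-1}$-component in the factorization of $D_{1}^{\leqslant n_{2}}$ must vanish.
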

\begin{proof}
If $n_{1}=n_{2}$, the statement is trivial, hence we only consider the case $n_{1} < n_{2}$.
\par To begin with, let $X \in D_{1}^{\leqslant n_{2}}$. We have the $(D_{1}^{\leqslant n_{2}-1},D_{1}^{\geqslant n_{2}})$-canonical distinguished triangle of $X$
$$X_{1}^{\leqslant n_{2}-1} \rightarrow X \rightarrow X_{1}^{\geqslant n_{2}} \rightarrow \Sigma(X_{1}^{\leqslant n_{2}-1}).$$
Since $X \in D_{1}^{\leqslant n_{2}}$, we have $X \cong X_{1}^{\leqslant n_{2}} $, which implies $(X_{1}^{\leqslant n_{2}})_{1}^{\geqslant n_{2}} \cong (X_{1}^{\geqslant n_{2}})_{1}^{\leqslant n_{2}} \in \Sigma^{-n_{2}}(H_{1})$. Then we have the $(D_{1}^{\leqslant n_{1}-2,},D_{1}^{\geqslant n_{1}-1})$-canonical distinguished triangle of $X_{1}^{\leqslant n_{2}-1}$
$$(X_{1}^{\leqslant n_{2}-1})_{1}^{\leqslant n_{2}-2} \rightarrow X_{1}^{\leqslant n_{2}-1} \rightarrow (X_{1}^{\leqslant n_{2}-1})_{1}^{\geqslant n_{2}-1} \rightarrow \Sigma((X_{1}^{\leqslant n_{2}-1})_{1}^{\leqslant n_{2}-2}).$$
Note that $(X_{1}^{\leqslant n_{2}-1})_{1}^{\geqslant n_{2}-1}\cong (X_{1}^{\geqslant n_{2}-1})_{1}^{\leqslant n_{2}-1} \in \Sigma^{-(n_{2}-1)}(H_{1})$ and $(X_{1}^{\leqslant n_{2}-1})_{1}^{\leqslant n_{2}-2}\cong X_{1}^{n_{2}-2}$. By the same process, we have the distinguished triangle 
$$(X_{1}^{\leqslant n_{1}+1})_{1}^{\leqslant n_{1}}\rightarrow X_{1}^{\leqslant n_{1}+1} \rightarrow (X_{1}^{\leqslant n_{1}+1})_{1}^{\geqslant n_{1}+1} \rightarrow \Sigma((X_{1}^{\leqslant n_{1}+1})_{1}^{\leqslant n_{1}}),$$
where $(X_{1}^{\leqslant n_{1}+1})_{1}^{\leqslant n_{1}} \in D_{1}^{\leqslant n_{1}}$ and $(X_{1}^{\leqslant n_{1}+1})_{1}^{\geqslant n_{1}+1} \in \Sigma^{-(n_{1}+1)}$. Hence $D_{1}^{\leqslant n_{2}}\subseteq D_{1}^{\leqslant n_{1}} \ast \Sigma^{-(n_{1}+1)}(H_{1})\ast \cdots \ast \Sigma^{-n_{2}}(H_{1})$. Conversely, $D_{1}^{\leqslant n_{1}} \subseteq D_{1}^{\leqslant n_{2}}$ and $\Sigma^{-i}(H_{1}) \subseteq D_{1}^{\leqslant n_{2}}$ for any integer $i \in \{n_{1}+1,\cdots,n_{2} \}$, hence $D_{1}^{\leqslant n_{2}}=D_{1}^{\leqslant n_{1}} \ast \Sigma^{-(n_{1}+1)}(H_{1})\ast \cdots\ast \Sigma^{-n_{2}}(H_{1})$. And $D_{1}^{\geqslant n_{1}}=\Sigma^{-n_{1}}(H_{1})\ast\cdots\ast \Sigma^{-(n_{2}-1)}(H_{1})\ast D_{1}^{\geqslant n_{2}}$ is just the dual statement.
\par According to remark\ref{useful remark} and associative law of $\ast$, the decomposition above is unique, hence $D_{1}^{[n_{1},n_{2}]}= \Sigma^{-n_{1}}(H_{1})\ast\cdots\ast  \Sigma^{-n_{2}}(H_{1})$.
%$i \in \{n_{1},n_{1}+1,...,n_{2}\}$, $\Sigma^{-i} \subseteq D_{1}^{[n_{1},n_{2}]}$, hence $D_{1}^{[n_{1},n_{2}]} \supseteq \Sigma^{-n_{1}}(H_{1})\ast \Sigma^{-(n_{1}+1)}(H_{1})\ast ...\ast \Sigma^{-n_{2}}(H_{1}).$

\end{proof}

By the theorem\ref{main theorem}, we know 
$$U=(U^{\leqslant 0},U^{\geqslant 1})=(D_{1}^{[n_{1},n_{2}]}\cap D_{2}^{\leqslant 0}, D_{2}^{\geqslant 1}\cap D_{1}^{[n_{1},n_{2}]})$$
is a t-structure on $D_{1}^{[n_{1},n_{2}]}$. Following lemma\ref{key lemma}, we know $D_{1}^{[n_{1},n_{2}]}=\Sigma^{-n_{1}}(H_{1})\ast\cdots\ast\Sigma^{-n_{2}}(H_{1})$ .Now, let's turn back to the main question. If there exist $i \in \{n_{1}+1,\cdots,n_{2}\}$ such that for any $n_{1}+1 \leqslant j\leqslant i $(resp.$i \leqslant j \leqslant n_{2}$), we have $\Sigma^{-j}(H) \subseteq U^{\leqslant 0}$(resp. $\Sigma^{-j}(H) \subseteq U^{\geqslant 1}$) , then we suppose that $a$ (resp. $b$) is the largest(resp. smallest) one; otherwise, we set $a=n_{1}$ and $b=n_{2}+1$. Notice that we always have $a \leqslant b-1$.

\begin{theorem}\label{second theorem}
Following all the notion above, we have $d=b-1-a$ and $m_{d}=a$.
\end{theorem}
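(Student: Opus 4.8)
The plan is to transport the entire question into the intermediate category $\mathcal{H}=D_{1}^{\geqslant n_{1}+1}\cap D_{1}^{\leqslant n_{2}}$ via Theorem~\ref{main theorem}, and then read the distance off from where the ``slices'' $\Sigma^{-j}(H_{1})$ sit with respect to $U$. Applying Theorem~\ref{main theorem} to the pair
$$\big(\Sigma^{-n_{1}}(D_{1}),\Sigma^{-n_{2}}(D_{1})\big)=\big((D_{1}^{\leqslant n_{1}},D_{1}^{\geqslant n_{1}+1}),(D_{1}^{\leqslant n_{2}},D_{1}^{\geqslant n_{2}+1})\big),$$
whose required inclusion $D_{1}^{\leqslant n_{1}}\subseteq D_{1}^{\leqslant n_{2}}$ holds, the t-structure $D_{2}$ (which by hypothesis satisfies $D_{1}^{\leqslant n_{1}}\subseteq D_{2}^{\leqslant 0}\subseteq D_{1}^{\leqslant n_{2}}$) corresponds to the t-structure $U=(U^{\leqslant 0},U^{\geqslant 1})$ on $\mathcal{H}$ with $U^{\leqslant 0}=\mathcal{H}\cap D_{2}^{\leqslant 0}$, $U^{\geqslant 1}=\mathcal{H}\cap D_{2}^{\geqslant 1}$, and, by the inverse map of the correspondence,
$$D_{2}^{\leqslant 0}=D_{1}^{\leqslant n_{1}}\ast U^{\leqslant 0},\qquad D_{2}^{\geqslant 1}=U^{\geqslant 1}\ast D_{1}^{\geqslant n_{2}+1}.$$
From Lemma~\ref{key lemma} I get, for every integer $n_{1}\leqslant k\leqslant n_{2}$, the factorisations $D_{1}^{\leqslant k}=D_{1}^{\leqslant n_{1}}\ast\Sigma^{-(n_{1}+1)}(H_{1})\ast\cdots\ast\Sigma^{-k}(H_{1})$ and $D_{1}^{\geqslant k+1}=\Sigma^{-(k+1)}(H_{1})\ast\cdots\ast\Sigma^{-n_{2}}(H_{1})\ast D_{1}^{\geqslant n_{2}+1}$. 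I will also use the elementary facts that $\Sigma^{-j}(H_{1})=D_{1}^{\leqslant j}\cap D_{1}^{\geqslant j}$ and that $U^{\leqslant 0}$ and $U^{\geqslant 1}$ are closed under extensions in $\mathcal{D}$ (each being an intersection of aisles and co-aisles of t-structures), so that a $\ast$-product of subcategories contained in $U^{\leqslant 0}$ (resp.\ $U^{\geqslant 1}$) again lies in $U^{\leqslant 0}$ (resp.\ $U^{\geqslant 1}$).

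First I would establish the upper bound $d(D_{1},D_{2})\leqslant b-1-a$. If $a>n_{1}$, then by definition $\Sigma^{-j}(H_{1})\subseteq U^{\leqslant 0}$ for all $n_{1}+1\leqslant j\leqslant a$, so extension-closure gives $\Sigma^{-(n_{1}+1)}(H_{1})\ast\cdots\ast\Sigma^{-a}(H_{1})\subseteq U^{\leqslant 0}$, whence
$$D_{1}^{\leqslant a}=D_{1}^{\leqslant n_{1}}\ast\big(\Sigma^{-(n_{1}+1)}(H_{1})\ast\cdots\ast\Sigma^{-a}(H_{1})\big)\subseteq D_{1}^{\leqslant n_{1}}\ast U^{\leqslant 0}=D_{2}^{\leqslant 0};$$
when $a=n_{1}$ this is trivial. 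Dually, $D_{2}^{\geqslant 1}=U^{\geqslant 1}\ast D_{1}^{\geqslant n_{2}+1}\supseteq\Sigma^{-b}(H_{1})\ast\cdots\ast\Sigma^{-n_{2}}(H_{1})\ast D_{1}^{\geqslant n_{2}+1}=D_{1}^{\geqslant b}$, i.e.\ $D_{2}^{\leqslant 0}\subseteq D_{1}^{\leqslant b-1}$. Hence $D_{1}^{\leqslant a}\subseteq D_{2}^{\leqslant 0}\subseteq D_{1}^{\leqslant b-1}$, which yields $d(D_{1},D_{2})\leqslant (b-1)-a$ (in particular $a\leqslant b-1$, since the distance is $\geqslant 0$).

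For the reverse inequality I would take arbitrary integers $m,e$ with $D_{1}^{\leqslant m}\subseteq D_{2}^{\leqslant 0}\subseteq D_{1}^{\leqslant m+e}$ and show $m\leqslant a$ and $m+e\geqslant b-1$. Using non-stability of $D_{1}$ and $D_{1}^{\leqslant n_{1}}\subseteq D_{2}^{\leqslant 0}\subseteq D_{1}^{\leqslant n_{2}}$ one first checks $n_{1}\leqslant m+e$ and $m\leqslant n_{2}$ (otherwise $D_{2}^{\leqslant 0}$ would be squeezed strictly inside itself). For $m\leqslant n_{1}$ we have $m\leqslant n_{1}\leqslant a$; and for $n_{1}+1\leqslant m\leqslant n_{2}$, each slice $\Sigma^{-j}(H_{1})=D_{1}^{\leqslant j}\cap D_{1}^{\geqslant j}$ with $n_{1}+1\leqslant j\leqslant m$ lies in $D_{1}^{\leqslant m}\cap D_{1}^{\geqslant n_{1}+1}\cap D_{1}^{\leqslant n_{2}}\subseteq D_{2}^{\leqslant 0}\cap\mathcal{H}=U^{\leqslant 0}$, so $i=m$ witnesses the defining property of $a$ and therefore $a\geqslant m$. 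Dually, $D_{2}^{\leqslant 0}\subseteq D_{1}^{\leqslant m+e}$ is equivalent to $D_{1}^{\geqslant m+e+1}\subseteq D_{2}^{\geqslant 1}$, which forces $\Sigma^{-j}(H_{1})\subseteq\mathcal{H}\cap D_{2}^{\geqslant 1}=U^{\geqslant 1}$ for all $m+e+1\leqslant j\leqslant n_{2}$, so $b\leqslant m+e+1$, i.e.\ $m+e\geqslant b-1$.

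Combining the two halves, every admissible $e$ satisfies $e\geqslant(b-1)-a$, so $d(D_{1},D_{2})\geqslant b-1-a$, and together with the upper bound $d(D_{1},D_{2})=b-1-a$. Finally, the chain $D_{1}^{\leqslant a}\subseteq D_{2}^{\leqslant 0}\subseteq D_{1}^{\leqslant a+d(D_{1},D_{2})}$ produced in the second paragraph exhibits $a$ as a shift realising the distance, so $m_{d}=a$ by Proposition~\ref{unique}. I expect the main obstacle to be purely bookkeeping rather than conceptual: one must handle the boundary cases carefully --- $m$ near $n_{1}$ or near $n_{2}$, the ``otherwise'' clauses $a=n_{1}$ and $b=n_{2}+1$, and the passage between ``$D_{2}^{\leqslant 0}\subseteq D_{1}^{\leqslant k}$'' and ``$D_{1}^{\geqslant k+1}\subseteq D_{2}^{\geqslant 1}$'' --- so that no off-by-one error slips in; once the slice picture, Lemma~\ref{key lemma}, and extension-closure of $U^{\leqslant 0},U^{\geqslant 1}$ are in place, each step is routine.
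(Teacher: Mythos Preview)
Your proposal is correct and follows essentially the same route as the paper: both arguments use Lemma~\ref{key lemma} and extension-closure of $U^{\leqslant 0}$, $U^{\geqslant 1}$ to obtain $D_{1}^{\leqslant a}\subseteq D_{2}^{\leqslant 0}\subseteq D_{1}^{\leqslant b-1}$, and then argue that no strictly smaller window is possible because the relevant heart-slices would land in $U^{\leqslant 0}$ or $U^{\geqslant 1}$ and contradict the extremality of $a$ and $b$. Your lower-bound step is the direct (non-contradiction) formulation of the paper's terse ``if $d<b-1-a$ and $m_{d}>a$'' argument, and your explicit handling of the boundary cases $m\leqslant n_{1}$, $m+e\geqslant n_{2}$ and of the invocation of Proposition~\ref{unique} for $m_{d}=a$ is more careful than the paper's sketch.
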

\begin{proof}
Firstly, we consider the case that those $i$ exist. Then we know $\Sigma^{-n_{1}}(H_{1})\ast...\Sigma^{-a}(H_{1}) \subseteq U^{\leqslant 0}$ and $\Sigma^{-b}(H_{1})\ast..\ast \Sigma^{-n_{2}}(H_{1}) \subseteq U^{\geqslant 1}$, since $U^{\leqslant 0}$ and $U^{\geqslant 1}$ are closed under extension. Then 
$$D_{1}^{\leqslant a}=D_{1}^{\leqslant n_{1}}\ast \Sigma^{-(n_{1}+1)}(H_{1})\ast\cdots\ast \Sigma^{-a}(H_{1}) \subseteq D_{2}^{\leqslant 0}$$
and 
$$D_{1}^{\geqslant b}=\Sigma^{-b}(H_{1})\ast\cdots\ast \Sigma^{-n_{2}}(H_{1})\ast D_{1}^{\geqslant n_{2}+1} \subseteq D_{2}^{\geqslant 1}.$$
Hence $D_{1}^{\leqslant a} \subseteq D_{2}^{\leqslant 0} \subseteq D_{1}^{\leqslant b-1}$, which implies that $d \leqslant b-1-a$ and $m_{d} \geqslant a$. If $d < b-1-a$ and $m_{d} > a$, then $\Sigma^{-m_{d}}(H_{1})\subseteq U^{\leqslant 0}$ and $\Sigma^{-(m_{d}+d)}(H_{1})\subseteq U^{\geqslant 1}$, which is a contradiction.
\par It's the same process for the case $a=n_{1}$ and $b=n_{2}+1$, and we finish the proof.
\end{proof}

Theorem \ref{second theorem} tells us when the t-structure can contract to a subintermediate. In fact, using theorem\ref{main theorem} and theorem\ref{second theorem} we can have the following proposition.

\begin{proposition}
Let $D_{1}=(D_{1}^{\leqslant 0}, D_{1}^{\geqslant 1})$ be a t-structure satisfying $H_{1}$ is non-zero and there is a non-trivial torsion pair $(T,F)$ in $H_{1}$. Then for any positive natural number $n$, we have a t-structure $D_{2}=(D_{2}^{\leqslant 0}, D_{2}^{\geqslant 1})$ such that $d(D_{1},D_{2})=n$.
\end{proposition}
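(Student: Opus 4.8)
The plan is to realise the desired $D_2$ through the correspondence of Theorem~\ref{main theorem} and then read off its distance from $D_1$ via Theorem~\ref{second theorem}. Since $d(D_1,\Sigma^{k}D_2)=d(D_1,D_2)$, it suffices, given $n\geqslant 1$, to produce a t-structure $D_2$ with $D_1^{\leqslant 0}\subseteq D_2^{\leqslant 0}\subseteq D_1^{\leqslant n}$; by Theorem~\ref{main theorem} these are in bijection with t-structures $U=(U^{\leqslant 0},U^{\geqslant 1})$ on $\mathcal H=D_1^{[1,n]}$, and by Lemma~\ref{key lemma} we may identify $\mathcal H=\Sigma^{-1}(H_1)\ast\cdots\ast\Sigma^{-n}(H_1)$. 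By Theorem~\ref{second theorem} (with $n_1=0$, $n_2=n$), $d(D_1,D_2)=b-a-1$, where $a$ is the largest $i$ with $\Sigma^{-j}(H_1)\subseteq U^{\leqslant 0}$ for all $1\leqslant j\leqslant i$ (and $a=0$ if no such $i$ exists), and $b$ is the smallest $i$ with $\Sigma^{-j}(H_1)\subseteq U^{\geqslant 1}$ for all $i\leqslant j\leqslant n$ (and $b=n+1$ otherwise). Hence it is enough to build $U$ with $\Sigma^{-1}(H_1)\not\subseteq U^{\leqslant 0}$ and $\Sigma^{-n}(H_1)\not\subseteq U^{\geqslant 1}$: then $a=0$, $b=n+1$, so $d(D_1,D_2)=n$.

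The candidate is the t-structure on $\mathcal H$ that spreads the torsion pair over every cohomological layer. Using $\Sigma^{-j}(H_1)=\Sigma^{-j}(T)\ast\Sigma^{-j}(F)$, put
\[
U^{\leqslant 0}=\Sigma^{-1}(T)\ast\Sigma^{-2}(T)\ast\cdots\ast\Sigma^{-n}(T),\qquad
U^{\geqslant 1}=\Sigma^{-1}(F)\ast\Sigma^{-2}(F)\ast\cdots\ast\Sigma^{-n}(F),
\]
equivalently $D_2^{\leqslant 0}=\{X\in D_1^{\leqslant n}\mid H_1^{j}(X)\in T\text{ for }1\leqslant j\leqslant n\}$. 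One must check that $(U^{\leqslant 0},U^{\geqslant 1})$ satisfies (T1'), (T2'), (T3'). Axiom (T1') is straightforward: using $\Sigma^{-1}(T)\subseteq D_1^{\leqslant 1}$, $\Sigma^{-n}(F)\subseteq D_1^{\geqslant n}$ and the fact that $D_1^{\leqslant 1}$, $D_1^{\geqslant n}$ are closed under extensions, both required inclusions reduce to $D_1^{\leqslant 0}\ast\Sigma^{-1}(T)\subseteq D_1^{\leqslant 1}$ and $\Sigma^{-n}(F)\ast D_1^{\geqslant n+1}\subseteq D_1^{\geqslant n}$. For (T3') one splits $X\in\mathcal H$ first along its $D_1$-cohomology (Lemma~\ref{key lemma}) and each layer $\Sigma^{-j}(X^{(j)})$ along $(T,F)$, and then applies the octahedral axiom together with Corollary~\ref{cor3x3} to re-assemble the $T$-layers ahead of the $F$-layers. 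Granting this, the uniqueness of the relevant filtration (Remark~\ref{useful remark}) gives $\Sigma^{-1}(H_1)\cap U^{\leqslant 0}=\Sigma^{-1}(T)$ and $\Sigma^{-n}(H_1)\cap U^{\geqslant 1}=\Sigma^{-n}(F)$; as $(T,F)$ is non-trivial we have $T\subsetneq H_1$ and $F\subsetneq H_1$, so both inclusions $\Sigma^{-1}(H_1)\subseteq U^{\leqslant 0}$ and $\Sigma^{-n}(H_1)\subseteq U^{\geqslant 1}$ fail, and Theorem~\ref{second theorem} yields $d(D_1,D_2)=n$. For $n=0$ take $D_2=D_1$.

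The delicate point is (T2') — and, relatedly, the re-grouping step in (T3'). For (T2') one needs $\operatorname{Hom}_{\mathcal D}(\Sigma^{-i}(T),\Sigma^{-j}(F))=0$ for all $1\leqslant i,j\leqslant n$: when $i<j$ this is $\operatorname{Hom}_{\mathcal D}(D_1^{\leqslant i},D_1^{\geqslant j})=0$, and when $i=j$ it is $\operatorname{Hom}_{H_1}(T,F)=0$, both automatic; but when $i>j$ it equals $\operatorname{Ext}^{\,i-j}_{\mathcal D}(T,F)$, and for a general torsion pair these positive-degree Ext groups need not vanish. I expect this to be the crux of the argument, and would spend most of the effort either identifying the extra input on $(T,F)$ (or on $\mathcal D$) that makes it vanish, or replacing the naive layer-by-layer recipe above by a choice of $U$ better adapted to the non-vanishing of those extensions.
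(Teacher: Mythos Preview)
Your construction is exactly the one the paper uses: set
\[
U^{\leqslant 0}=\Sigma^{-1}(T)\ast\cdots\ast\Sigma^{-n}(T),\qquad
U^{\geqslant 1}=\Sigma^{-1}(F)\ast\cdots\ast\Sigma^{-n}(F),
\]
prove this is a t-structure on $D_{1}^{[1,n]}$, and invoke Theorems~\ref{main theorem} and~\ref{second theorem}. The paper runs an induction on $n$ rather than your direct argument, but the content is the same.

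The point you isolate as ``delicate'' is precisely where the paper's proof is weakest. The paper handles (T2') in the inductive step with the single line ``is easy to check'', and offers no further justification. Your analysis is correct: the required vanishing reduces to $\operatorname{Hom}_{\mathcal D}(\Sigma^{-i}(T),\Sigma^{-j}(F))=0$ for all $1\leqslant i,j\leqslant n$, and for $i>j$ this is $\operatorname{Hom}_{\mathcal D}(T,\Sigma^{\,i-j}F)$, which has no reason to vanish for an arbitrary torsion pair. A concrete failure already occurs for $n=2$: take $\mathcal D=D^{b}(kQ)$ with $Q\colon 1\to 2$, and the torsion pair $T=\operatorname{add}(S_{1})$, $F=\operatorname{add}(S_{2}\oplus P_{1})$. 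Then $\Sigma^{-2}(S_{1})\in U^{\leqslant 0}$, $\Sigma^{-1}(S_{2})\in U^{\geqslant 1}$, but
\[
\operatorname{Hom}_{\mathcal D}\bigl(\Sigma^{-2}(S_{1}),\Sigma^{-1}(S_{2})\bigr)\cong\operatorname{Ext}^{1}_{kQ}(S_{1},S_{2})\cong k\neq 0,
\]
so (T2') fails and $(D_{1}^{\leqslant 0}\ast U^{\leqslant 0},\,U^{\geqslant 1}\ast D_{1}^{\geqslant 3})$ is \emph{not} a t-structure. The paper's worked example avoids this because its torsion class $\mathcal T=\operatorname{add}(P_{3})$ consists of projectives, forcing $\operatorname{Ext}^{>0}(\mathcal T,-)=0$; but nothing in the hypotheses guarantees that in general.

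In short: you have reproduced the paper's argument and, rather than missing something, you have put your finger on a genuine gap that the paper itself glosses over. Closing it would require either an extra hypothesis of the type $\operatorname{Hom}_{\mathcal D}(T,\Sigma^{k}F)=0$ for $1\leqslant k\leqslant n-1$, or a different choice of $U$.
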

\begin{proof}
The case that $n=0$ is trivial, hence we consider when $n\geqslant 1$.In fact, for any positive natural $n$, the pair of full subcategories 
$$U=(U^{\leqslant 0},U^{\geqslant 1})=(\Sigma^{-1}(T)\ast\cdots\ast \Sigma^{-n}(T), F\ast\cdots\ast \Sigma^{-n}(F))$$
is a t-structure on $D_{1}^{[1,n]}$. We finish it by induction.
\begin{itemize}
\item[(1)]for n=1: $D_{1}^{\leqslant -1} \ast T\subseteq  D_{1}^{\leqslant 0} \ast \Sigma^{-1}(T)$, since $D_{1}^{\leqslant -1},T \subseteq D_{1}^{\leqslant 0}$ and $D_{1}^{\leqslant 0}$ is closed under extension; (T2') and (T3') are obvious, since $D_{1}^{[1]}=\Sigma^{-1}(H_{1})$, and $(\Sigma^{-1}(T),\Sigma^{-1}(F))$ is torsion pair on $\Sigma^{-1}(H_{1})$. 

\item[(2)] assume that $(\Sigma^{-1}(T)\ast \cdots \ast \Sigma^{-n}(T), \Sigma^{-1}(F)\ast\cdots\ast \Sigma^{-n}(F)$ is a t-structure on $D_{1}^{[1,n]}$, then we have
$$D_{1}^{\leqslant -1}\ast T\ast\cdots\ast \Sigma^{-n+1}(T)\subseteq D_{1}^{\leqslant 0}\ast \Sigma^{-1}(T)\ast \cdots \ast \Sigma^{-n}(T),$$
and
$$F\ast\cdots \ast \Sigma^{-(n-1)}(T) \ast D_{1}^{\geqslant n} \supseteq \Sigma^{-1}(F)\ast \cdots \ast \Sigma^{-n}(F)\ast D_{1}^{\geqslant n+1},$$
which means that 
$$D_{1}^{\leqslant -1}\ast T\ast\cdots \ast \Sigma^{-n+1}(T)\ast \Sigma^{-n+2}(H_{1}) \subseteq D_{1}^{\leqslant 0}\ast \Sigma^{-1}(T)\ast\cdots \ast \Sigma^{-n}(T)\ast \Sigma^{-n+1}(T),$$
and
$$F\ast\cdots \ast \Sigma^{-(n-1)}(F)\ast \Sigma^{-n}(F) \ast D_{1}^{\geqslant n+1} \supseteq \Sigma^{-1}(F)\ast\cdots \ast \Sigma^{-n}(F)\ast \Sigma^{-n+1}(F)\ast D_{1}^{\geqslant n+2},$$
hence we finish (T1'). In addition, 
$$\text{Hom}_{\mathcal{D}}(D_{1}^{\leqslant 0}\ast \Sigma^{-1}(T)\ast \cdots \ast \Sigma^{-n}(T)\ast \Sigma^{-n+1}(T),\Sigma^{-1}(F)\ast\cdots \ast \Sigma^{-n}(F)\ast \Sigma^{-n+1}(F)\ast D_{1}^{\geqslant n+2})=0,$$
is easy to check. 
\par For any $X \in D_{1}^{[1,n+1]}$, there exist $Y \in D_{1}^{[1,n]}$, $Z \in H_{1}$ such that $Y \rightarrow X \rightarrow \Sigma^{-n+1}(Z) \rightarrow \Sigma(Y)$ is a distinguished triangle. Then we have the following commutative diagram
$$\begin{tikzcd}
Y_{1} \arrow[r] \arrow[d] & U_{0} \arrow[d, dotted] \arrow[r] & \Sigma^{-n+1}(Z_{T}) \arrow[d] \\
Y \arrow[r] \arrow[d]     & X \arrow[r] \arrow[d, dotted]     & \Sigma^{-n+1}(Z) \arrow[d]     \\
Y_{2} \arrow[r]           & U_{1} \arrow[r]                   & \Sigma^{-n+1}(Z_{F})          
\end{tikzcd}$$
where all rows and columns are distinguished triangle.
\end{itemize}
Hence $U=(U^{\leqslant 0},U^{\geqslant 1})$ is indeed a t-structure on $D_{1}^{[1,n]}$. Following theorem\ref{main theorem}, we know $D_{2}=(D_{2}^{\leqslant 0},D_{2}^{\geqslant 1})=(D_{1}^{\leqslant 0}\ast U^{\leqslant 0},U^{\geqslant 1}\ast D_{1}^{\leqslant n+1})$ is a t-structure satisfying $D_{1}^{\leqslant 0} \subseteq D_{2}^{\leqslant 0} \subseteq D_{1}^{\leqslant n}$, and by theorem\ref{second theorem} we know $d(D_{1},D_{2})=n$
\end{proof}

We finish this section in a example. 
\begin{example}
Let $k$ be a field and $Q$ be the quiver 
$$\begin{tikzcd}
1 \arrow[r] & 2 \arrow[r] & 3
\end{tikzcd}.$$
Let $A=kQ$.Then $A$ has finite representation type, and all the indecomposable finitely generated left modules are as follow 
$$\begin{tikzcd}
1 \arrow[d] & 0 \arrow[d] & 0 \arrow[d] & 1 \arrow[d] & 1 \arrow[d] & 0 \arrow[d] \\
1 \arrow[d] & 1 \arrow[d] & 0 \arrow[d] & 1 \arrow[d] & 0 \arrow[d] & 1 \arrow[d] \\
1           & 1           & 1           & 0           & 0           & 0           \\
P_{3}       & P_{2}       & P_{1}       & I_{2}       & S_{1}       & S_{2}      
\end{tikzcd}$$
Let $\mathcal{T}=add(P_{3})$ and $\mathcal{F}=add(P_{2}\bigoplus I_{2} \bigoplus S_{1} \bigoplus S_{2})$ , then $(\mathcal{T},\mathcal{F})$ is a non-trivial torsion pair on $A$-mod. Let $D(A-mod)$ be the derived category of $A$-mod, then $D(A-mod)$ has a canonical t-structure $D_{1}=(D_{1}^{\leqslant 0}, D_{1}^{\geqslant 1})$ where 
$$D_{1}^{\leqslant 0}=\{X \in D(A-mod)\,|\, H^{i}(X)=0 \text{\; for any\;}  i \geqslant 1 \}$$
and 
$$D_{1}^{\geqslant 1}=\{X \in D(A-mod)\,|\, H^{i}(X)=0 \text{\; for any\;}  i \leqslant 0 \}.$$
For any natural number $n$, 
$$D_{2}=(D_{2}^{\leqslant 0},D_{2}^{\geqslant 1})=(D_{1}^{\leqslant -1} \ast \mathcal{T}\ast\cdots\ast \Sigma^{-n}(\mathcal{T}), \mathcal{F}\ast\cdots\ast \Sigma^{-n)}(\mathcal{F}) \ast D_{1}^{\geqslant n+1})$$
is a t-structure on $D(A-mod)$ with $d(D_{1},D_{2})=n$. In addition, we can also construct t-structure on $D(A-Mod)$ using the bijection in \cite{marks2021lifting} Section 4.
\end{example}

\section*{Acknowledgement}
I gratefully appreciate the help provided by Xiao Hu and Xiaohu Chen. For Xiao Hu, he is one of my best friends who teaches me so much math knowledge, and the main motivation of this paper comes  from my discussion with him.  For Xiaohu Chen, he is also one of my best friends, and I express sincere gratitude to him for spending time in checking some details in this paper.

\bibliographystyle{plain}
\bibliography{correspondence.bib}

\Addresses

\end{document}